\documentclass[reqno]{amsart}
\usepackage[margin=1.5in]{geometry}
\usepackage{palatino}
\usepackage{mathpazo}
\usepackage{amsmath, amssymb, amsthm, mathrsfs}
\usepackage{tikz-cd}
\usepackage{hyperref}
\usepackage{enumerate}

\newtheorem{theorem}{Theorem}[section]
\newtheorem{introthm}{Theorem}

\newtheorem{lemma}[theorem]{Lemma}
\newtheorem{proposition}[theorem]{Proposition}

\newtheorem{corollary}[theorem]{Corollary}

\newtheorem*{theorem*}{Theorem}

\theoremstyle{definition}
\newtheorem{definition}[theorem]{Definition}

\newtheorem{example}[theorem]{Example}

\theoremstyle{remark}
\newtheorem{remark}[theorem]{Remark}

\newcommand{\LL}{\mathbb{L}}
\newcommand{\TT}{\mathbb{T}}
\newcommand{\Gm}{\mathbb{G}_m}
\newcommand{\cO}{\mathcal{O}}
\newcommand{\cL}{\mathcal{L}}
\newcommand{\cK}{\mathcal{K}}
\newcommand{\K}{\mathbb{K}}
\newcommand{\DR}{\mathbf{DR}}
\newcommand{\Spec}{\mathrm{Spec\,}}
\newcommand{\bfem}[1]{\textbf{\textit{#1}}}

\newcommand{\dCrit}{\mathrm{dCrit}}
\newcommand{\LFc}{\mathfrak{L}\mathcal{F}_c}
\newcommand{\Hom}{\mathrm{Hom}}

\makeatletter
\g@addto@macro{\endabstract}{\@setabstract}
\newcommand{\authorfootnotes}{\renewcommand\thefootnote{\@fnsymbol\c@footnote}}%
\makeatother

\begin{document}
\title[Equivariant Contact Darboux Quotients]{Equivariant Contact Darboux Quotients}
\author{Efe \.{I}zbudak}
\date{\today}

\begin{center}
  \LARGE
  Equivariant Contact Darboux Quotients \par \bigskip

  \normalsize
  \authorfootnotes
  Efe \.{I}zbudak\footnote{efe.izbudak@studium.uni-hamburg.de}\par \bigskip

  Department of Mathematics, Universit\"{a}t Hamburg\par \bigskip
\end{center}

{\renewcommand{\thefootnote}{}%
\footnotetext{2020 \emph{Mathematics Subject Classification.} Primary 14A30;
Secondary 53D10, 14N35.\par
\emph{Key words and phrases.} Derived algebraic geometry, shifted contact
structures, equivariant Darboux theorem, \'etale slices, derived critical
locus, Donaldson--Thomas invariants, quivers with potential.}}

\begin{abstract}
We prove an equivariant Darboux theorem for $-1$-shifted contact derived Artin stacks. Such a stack admits a smooth atlas by contact Darboux charts: the derived discriminant locus $\Delta\mathrm{loc}(s)$ of a relative section $s$, extended in degree $-2$ along the relative dimension of the chart. Near a point with linearly reductive stabilizer $G$ acting on the contact line through a character $\chi$, the stack is \'etale-locally the quotient $[\Delta\mathrm{loc}_G(s)/G]$, whose degree $-2$ generators are indexed by the Lie algebra of $G$ with differential a contact moment map. The cotangent complex sees these generators as $\mathrm{Lie}\,\ker(\chi)$ in degree $-2$; they vanish exactly when $\ker(\chi)$ is finite, and the quotient of the discriminant locus alone is a local model at no point where they are nonzero. For a quiver with potential the contact moment map is the moment map of the doubled quiver, the classical truncation is the representation space of the Jacobi algebra, and the degree $-2$ generators are carried exactly by the locus of positive-dimensional stabilizer.
\end{abstract}

\tableofcontents
\newpage
\section{Introduction and Summary}

The local structure of Artin stacks in derived symplectic geometry is given by
the Darboux theorem of Ben-Bassat, Brav, Bussi, and Joyce \cite[Theorem
2.10]{BenBassat2015}, which guarantees the existence of smooth atlases in Darboux form.
For the purposes of Donaldson-Thomas enumerative theory, this local model
refines, near a point with linearly reductive stabilizer $G$, to the derived
critical locus $\dCrit([f/G])$ of a $G$-invariant function $f$ on a quotient
stack $[U/G]$ \cite[Proposition 3.13]{KPS}; see also \cite{Descombes,
ParkPush}. On the atlas this is $\dCrit(f)$ extended by the Koszul--Tate
generators of the shifted moment map.

In the contact setting, Berktav develops the local theory of shifted
contact structures in \cite[Section 3]{kib1} and proves in \cite[Theorem
3.7]{kib2} that shifted contact derived Artin stacks admit smooth Darboux
atlases. Following the foundational results of \cite{kib1, kib2}, we have shown in \cite[Theorem
1.1]{IzbudakBerktav2026} that $\Gm$-equivariant quotients of symplectic derived Artin stacks are contact, and further built the contact-symplectic dictionary in \cite[Theorem 1.2]{IzbudakBerktav2026} and \cite[Theorem
1.1]{IzbudakBerktav2} by proving that the derived intersection of Legendrians is
contact, and constructing the global non-linear derived Legendrian categories
$\mathcal{F}_c(X)$ and $Leg_n$.

Note that, despite the existence of local model results for $-1$-shifted derived contact Artin stacks, equivariant \'etale vanishing cycles over a base field are computed on a quotient
by a reductive group, which a smooth atlas alone does not provide. This paper
supplies such quotients for $-1$-shifted contact stacks by the following theorem.

\begin{introthm}[Equivariant Contact Darboux Theorem] \label{thm:A}
Every quasi-separated, locally finitely presented $-1$-shifted contact derived
Artin stack $Z$ over $\K$ with affine stabilizers
\begin{enumerate}[(i)]
\item admits a smooth atlas by contact Darboux charts, and by the derived discriminant loci $\Delta\mathrm{loc}(s)$ themselves whenever the atlas can be taken \'etale (cf. Theorem \ref{thm:contact_darboux}(1)).
\item is, if $Z$ is a derived scheme, Zariski-locally around each
$\K$-point the derived discriminant locus of a regular function (cf. Corollary \ref{cor:schematic}).
\item is \'etale-locally equivalent to the quotient $[\Delta\mathrm{loc}_G(s)/G]$ for a
$\chi^{-1}$-semi-invariant function $s$ on a $G$-representation, near any point
$z \in Z(\K)$ with linearly reductive stabilizer $G$ acting on the contact line
through a character $\chi$ (cf. Theorem \ref{thm:contact_darboux}(2)).
\end{enumerate}
\end{introthm}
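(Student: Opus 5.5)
The plan is to reduce each part to the symplectic local theory through the contact–symplectic dictionary of \cite{IzbudakBerktav2026}, and then descend along the $\Gm$-action on the contact line.

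\textbf{Part (i).} Start from Berktav's Darboux atlas \cite[Theorem 3.7]{kib2}. Each point of $Z$ admits a smooth morphism $\varphi\colon U\to Z$ from a derived affine $U$ in which the pulled-back contact data takes standard form. Concretely, $U$ is a derived discriminant locus $\Delta\mathrm{loc}(s)$ of a relative section $s$ of a line bundle, which I will identify with the contact line.

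The relative tangent complex $\TT_\varphi$ sits in degree $0$. Its dual shifted by $-1$ contributes the degree $-2$ generators, in the same way as the symplectic case of \cite[Theorem 2.10]{BenBassat2015}. To build the chart, I will attach to $\Delta\mathrm{loc}(s)$ free generators dual to $\TT_\varphi$ in degree $-2$, with differential given by the contracted contact form. This is the "contact Darboux chart". When $\varphi$ is étale, $\TT_\varphi=0$, so the chart is $\Delta\mathrm{loc}(s)$ itself.

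\textbf{Part (ii).} For a derived scheme, the atlas can be refined to Zariski opens. I will argue as in Brav–Bussi–Dupont–Joyce–Szendrői for $-1$-shifted symplectic schemes. At a $\K$-point, choose a minimal standard form, with $\dim$ of degree $0$ generators equal to $\dim H^0(\TT)$. Trivialize the contact line on a Zariski neighbourhood. The section $s$ then becomes a regular function, and its discriminant locus gives the local model.

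\textbf{Part (iii).} This part carries the real content. Let $z$ have linearly reductive stabilizer $G$. By the Alper–Hall–Rydh étale slice theorem (derived version, as in \cite{KPS}), $Z$ is étale-locally $[W/G]$ with $W$ affine and $G$-equivariant. The slice can be taken so that its tangent space at $z$ is the $G$-representation $V=H^0(\TT_{Z,z})$ plus the Lie algebra correction.

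Next, pass to the symplectization. By \cite[Theorem 1.1]{IzbudakBerktav2026}, the $\Gm$-torsor $\widetilde Z$ of the contact line is $0$-shifted symplectic up to twist, or rather $-1$-shifted symplectic. The stabilizer of a lift $\tilde z$ is $\ker\chi$, and $G$ acts through $G\times\Gm$ twisted by $\chi$.

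I will then apply the equivariant Darboux theorem \cite[Proposition 3.13]{KPS} to $\widetilde Z$ with the group $G$ acting via $\chi$-twisted scaling. This produces a $G$-equivariant function which is homogeneous in the fibre coordinate. Restricting to the slice $t=1$ gives a $\chi^{-1}$-semi-invariant $s$ on $V$. The Koszul–Tate generators of the moment map then become the degree $-2$ generators indexed by $\mathrm{Lie}\,G$, with differential the contact moment map. This is what defines $\Delta\mathrm{loc}_G(s)$.

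\textbf{Main obstacle.} The hardest step is checking that the $\Gm$-descent commutes with the equivariant Darboux form. Specifically, the minimal chart for $\widetilde Z$ must be chosen $G\times\Gm$-equivariantly and homogeneous of the correct weight. To get this, I would first average with the Reynolds operator for the linearly reductive group $G\times\Gm$, applied at each step of the Darboux construction. The remaining task is to verify that the resulting contact form on $[\Delta\mathrm{loc}_G(s)/G]$ matches the one on $Z$, up to an étale automorphism. For that I would use uniqueness of Darboux forms up to equivalence.
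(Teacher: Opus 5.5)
Your parts (i) and (ii) agree with the paper, which also reads them off from the atlas of \cite[Theorem 3.7]{kib2} and the trivial-group case of Proposition~\ref{prop:equiv_darboux}. (The degree $-2$ generators are not attached to $\Delta\mathrm{loc}(s)$ afterwards; a chart of positive relative dimension already carries them.) The gap is in (iii), at the step you flag yourself.

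\cite[Proposition 3.13]{KPS} is a statement at a point, for its stabilizer acting symplectically. At a lift $\widetilde z$ the stabilizer is $\ker(\chi)$. Applied to $\widetilde Z$, that result therefore gives a chart with group $\ker(\chi)$ and moment-map generators indexed by $\mathrm{Lie}\,\ker(\chi)$ rather than $\mathfrak{g}$. It also gives no homogeneity in a fibre coordinate. As it stands, such a chart can neither be descended along $p$ nor give a model of $Z$ inducing an isomorphism of stabilizers at $z$.

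The whole of $G$ appears on the symplectification only as $\Gamma=\{(g,\chi(g)^{-1})\}\subseteq G\times\Gm$, the stabilizer of $(s_0,1)$ in $S\times\Gm$. But $\Gamma$ rescales $\omega$ through $\chi^{-1}$, so it is not a symplectic action unless $\chi$ is trivial. Even when $\chi$ is trivial, \cite{KPS} gives no weight-one homogeneity in $t$.

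Your remedy, Reynolds averaging over $G\times\Gm$ at each step of the Darboux induction, fails as stated. Every step of that induction is anchored at a point fixed by the group: the module $\mathfrak{m}/\mathfrak{m}^2$, the invariant localization $D(R(f_0))$, and minimality at the point (cf.\ Lemma~\ref{lem:reynolds} and Proposition~\ref{prop:equiv_standard_form}). Since $\Gm$ acts freely, $G\times\Gm$ fixes no point of $\widetilde Z$.

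The missing idea is to remove the free $\Gm$ by a slice before making anything equivariant. On the \'etale slice $u\colon[S/G]\to Z$, the group $G$ does fix $s_0$. Trivialize the contact line there by a $\chi$-semi-invariant section, so the contact form becomes a $\chi^{-1}$-semi-invariant $1$-form $\alpha_0$ on $S$. Then run the contact Darboux induction of \cite[Theorem 3.7]{kib2} $G$-equivariantly on $S$. This is Proposition~\ref{prop:equiv_darboux}: the rescaling unit $\iota_{\partial/\partial z}\alpha_0$ is automatically invariant, and semi-invariance of $\alpha_0$ forces the weights of $y_i, z$, hence the $\chi^{-1}$-semi-invariance of $s$. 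Only afterwards does one pass to $\Spec(A[t,t^{-1}])$. There one identifies the $\dim G$ generators in degree $-2$ with the shifted moment map of $\dCrit([\,t\cdot s\,/\,G\,])$ and divides by $t$. Your $G\times\Gm$-equivariant picture along the closed orbit $\{s_0\}\times\Gm$ reduces to exactly this, via $S\times\Gm\cong(G\times\Gm)\times_\Gamma S$.

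In this order no ``uniqueness of Darboux forms'' is needed, and none is available; the paper leaves even the notion of contactomorphism open. The contact structure on $[\Delta\mathrm{loc}_G(s)/G]$ is by construction the pullback along $u$ of the one on $Z$. On the symplectic side, a $\Gm$-equivariant match of $\omega$ already forces $\lambda=\iota_{t\,\partial/\partial t}\,\omega$, and hence $\alpha$.

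One minor correction: since $s_0$ is $G$-fixed, the infinitesimal action vanishes there, so $V=h^0(\TT_Z|_z)$ with no Lie algebra correction.
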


A contact Darboux chart is the Koszul--Tate algebra of $\Delta\mathrm{loc}(s)$
with generators adjoined in degree $-2$, and $\Delta\mathrm{loc}_G(s)$ is the
chart whose generators are indexed by the Lie algebra of $G$, with differential
the contact moment map (Definition \ref{def:equiv_discriminant}).

The paper's second main theorem ties the contact model to the symplectic theory, showing that symplectifying the contact model lands it exactly
on the local model of quotient Donaldson--Thomas theory.

\begin{introthm}[Symplectification of the local model] \label{thm:B}
Let $A_G$ be the Koszul--Tate algebra of $\Delta\mathrm{loc}_G(s)$ and
$\widetilde{A}_G := A_G[t, t^{-1}]$, with $\Gm$ scaling $t$ with weight $1$ and
$G$ acting on $t$ through character $\chi$. Then 
\begin{enumerate}[(i)]
\item $\widetilde{A}_G$ is the standard-form
Koszul--Tate cdga presenting $\dCrit([\,t \cdot s\,/\,G\,])$ (cf. Proposition \ref{prop:equiv_sympl}(1)).
\item $A_G = \widetilde{A}_G^{\Gm}$ (cf. Proposition \ref{prop:equiv_sympl}(1)).
\item The symplectification of $[\Delta\mathrm{loc}_G(s)/G]$ is
$\dCrit([\,t \cdot s\,/\,G\,])$ (cf. Proposition \ref{prop:equiv_sympl}(2)).
\end{enumerate}
\end{introthm}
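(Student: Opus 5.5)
The proof is a direct comparison of explicit Koszul--Tate cdgas, followed by a descent statement for the $\Gm$-quotient. Write $U = \Spec B$ for the smooth $G$-representation chart, with coordinates $x_1,\dots,x_n$, and $s \in B$ with $g\cdot s = \chi(g)^{-1}s$. By Definition \ref{def:equiv_discriminant}, $A_G$ is obtained from $B$ as follows. We adjoin degree $-1$ generators $y_i$ dual to $dx_i$ and a degree $-1$ generator $u$ dual to the contact line, with differentials $d y_i = \partial s/\partial x_i$ and $d u = s$, up to the normalisation used in the definition. We then adjoin degree $-2$ generators $\xi_a$ indexed by a basis $e_a$ of $\mathfrak g = \mathrm{Lie}\,G$, with $d\xi_a = \mu_a$ the contact moment map, namely $\mu_a = \sum_i (e_a\cdot x)_i\, y_i$ corrected by the term $d\chi(e_a)\,u$ coming from the weight of $s$.

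\textbf{Part (i).} On $\widetilde U = U\times\Gm$ with coordinate $t$, consider $F = t\cdot s$. Because $G$ acts on $t$ through $\chi$, $F$ is $G$-invariant, and it is $\Gm$-homogeneous of weight $1$. The standard-form presentation of $\dCrit([F/G])$ from \cite{KPS} (Proposition 3.13) is as follows. It has degree $-1$ generators $\tilde y_i$ and $\tilde u$ dual to $dx_i$ and $dt$, with $d\tilde y_i = t\,\partial_i s$ and $d\tilde u = s$. It has degree $-2$ generators $\tilde\xi_a$ with $d\tilde\xi_a$ equal to the symplectic moment map $\sum_i (e_a x)_i\tilde y_i + d\chi(e_a)\,t\,\tilde u$.

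Since $t$ is invertible, the rescaling $\tilde y_i = t\,y_i$, $\tilde u = u$, $\tilde\xi_a = t\,\xi_a$ is an isomorphism of graded algebras $A_G[t^{\pm1}] \to$ (KT algebra of $F$). Checking that it intertwines the differentials is a generator-by-generator computation. The point to verify is that the contact moment map times $t$ is exactly the symplectic moment map. This uses Euler-type identities: $G$-equivariance of $s$ gives $\sum_i (e_a x)_i\,\partial_i s = -d\chi(e_a)\, s$, and this is what makes $d^2 = 0$ on both sides compatibly. I expect this matching of the $\chi$-correction term, with its signs and weights, to be the main obstacle. I would fix the normalisation in Definition \ref{def:equiv_discriminant} precisely so that it holds, checking first the case $G=\Gm$ acting on a line.

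\textbf{Part (ii).} The $\Gm$-weights are $1$ on $t$, so the rescaled generators $y_i = t^{-1}\tilde y_i$, $\xi_a = t^{-1}\tilde\xi_a$, $u$ and the $x_i$ all have weight $0$ when weights are assigned so that $t\,\tilde y$ is balanced. Then $\widetilde A_G = A_G \otimes \K[t^{\pm1}]$ as $\Gm$-cdgas, with $A_G$ of weight $0$ and $d t = 0$. Taking invariants is exact for the linearly reductive $\Gm$, and $\K[t^{\pm1}]^{\Gm} = \K$, so $\widetilde A_G^{\Gm} = A_G$.

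\textbf{Part (iii).} Part (ii) identifies $[\Spec\widetilde A_G/\Gm]$ with $\Spec A_G$, so $\Spec \widetilde A_G \to \Spec A_G$ is a $\Gm$-torsor. It is the total space of the contact line bundle minus its zero section, because $t$ is the tautological coordinate along the contact line twisted by $\chi$. This holds $G$-equivariantly, so it descends to a $\Gm$-torsor $\dCrit([F/G]) \to [\Delta\mathrm{loc}_G(s)/G]$. It then remains to check that the $-1$-shifted symplectic form on $\dCrit([F/G])$ is the one produced by symplectification, i.e. $d(t\alpha)$ for the contact form $\alpha$. In Darboux coordinates both are $\sum_i d x_i\, d\tilde y_i + dt\, d\tilde u + (\text{moment terms})$, and the converse direction is \cite[Theorem 1.1]{IzbudakBerktav2026}. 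Uniqueness of the symplectification then gives (iii).
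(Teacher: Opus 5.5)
Your proposal is correct and follows essentially the paper's route. You rescale $\widetilde y_i = t y_i$, $\zeta = z$, $\widetilde w_\xi = t w_\xi$ to match the Koszul--Tate generators and differentials of $\dCrit([\,t\cdot s\,/\,G\,])$, read off $A_G = \widetilde A_G^{\Gm}$ from the $\Gm$-weights, and identify the standard form with $d_{\DR}(t\alpha)$ before descending via \cite[Theorem 1.1]{IzbudakBerktav2026}.

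Two remarks on the details. The step you flag as the main obstacle is actually immediate: $t\, d(w_\xi)$ is literally the shifted moment map once $a_\xi(t) = \langle d\chi, \xi\rangle t$, and the Euler identity is needed only for $d^2 = 0$. The comparison in (iii), which you leave as an assertion, is the one-line Leibniz computation $d_{\DR}(t\alpha) = d_{\DR}t \wedge d_{\DR}z + \sum_i d_{\DR}(t y_i)\wedge d_{\DR}x_i$. On the atlas this has no terms involving the degree $-2$ generators, so your ``moment terms'' should not appear there.
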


The third main
theorem says that the degree $-2$ generators are forced, and identifies what
they measure.

\begin{introthm}[The obstruction] \label{thm:C}
Let $Z$ be a $-1$-shifted contact derived Artin stack and $z \in Z(\K)$ a point
whose stabilizer $G$ acts on the contact line through $\chi$. Then
\[ h^{-2}(\LL_Z|_z) \simeq \mathrm{Lie}\,\ker(\chi) \]
(cf. Proposition \ref{prop:degree_two}(1)).
Moreover $\Delta\mathrm{loc}_G(s) = \Delta\mathrm{loc}(s)$ if and only if
$\dim G = 0$ (cf. Lemma \ref{lem:equiv_KT_model}(2)), and consequently neither an \'etale morphism
$([\Delta\mathrm{loc}(s)/G], 0) \to (Z, z)$ inducing an isomorphism of
stabilizers nor a smooth atlas of $Z$ by contact Darboux schemes
$\Delta\mathrm{loc}(s)$ exists near a point with $\dim \ker(\chi) > 0$ (cf. Proposition \ref{prop:degree_two}(1)).
\end{introthm}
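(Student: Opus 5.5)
We compute $h^{-2}(\LL_Z|_z)$ on the equivariant local model, read off when the extra generators vanish, and then deduce the non-existence statements from degree considerations. By Theorem \ref{thm:A}(iii), after an étale map inducing an isomorphism on stabilizers we may replace $(Z,z)$ by $([\Delta\mathrm{loc}_G(s)/G],0)$. Here $s$ is a $\chi^{-1}$-semi-invariant function on a $G$-representation $V$, and $s$ has vanishing differential at $0$, as one may arrange for a minimal chart. The cotangent complex of the quotient is the fibre of $\LL_{\Delta\mathrm{loc}_G(s)} \to \mathfrak{g}^\vee\otimes\cO$, the map being dual to the infinitesimal action. Restricted to $0$, the Koszul--Tate presentation gives a complex with the following terms:
\[ V^\vee \ \text{(degree $0$)}, \quad \text{the dual generators of the contact line / discriminant equations (degree $-1$)}, \quad \mathfrak{g} \ \text{(degree $-2$)}, \]
together with $\mathfrak{g}^\vee$ in degree $1$ coming from the stack quotient.

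The key step is to identify the differential out of the degree $-2$ generators at $0$, which is the linearisation of the contact moment map. The symplectification in Theorem \ref{thm:B} is the convenient tool here. On $\widetilde{A}_G$ the degree $-2$ generators are the Koszul--Tate generators of the shifted moment map of $\dCrit([t\cdot s/G])$, with differential $\xi\mapsto \xi\cdot(\text{coordinates}) + \chi(\xi)\,t\,\partial_t$. Taking $\Gm$-invariants via Theorem \ref{thm:B}(ii) and restricting to the point, the term $\xi\cdot x$ vanishes at $x=0$. What survives is the map $\mathfrak{g}\to \K$, $\xi\mapsto d\chi(\xi)$, landing in the degree $-1$ generator dual to the contact line (the $t\partial_t$ direction). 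Hence
\[ h^{-2}(\LL_Z|_z)=\ker(d\chi:\mathfrak{g}\to\K)=\mathrm{Lie}\,\ker(\chi), \]
which is Proposition \ref{prop:degree_two}(1). The identification $\ker(d\chi)=\mathrm{Lie}\ker\chi$ holds in characteristic zero, which is implicit in linear reductivity together with the Lie-algebra indexing.

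For the second claim, Definition \ref{def:equiv_discriminant} adjoins one degree $-2$ generator for each element of a basis of $\mathfrak{g}$. So $\Delta\mathrm{loc}_G(s)=\Delta\mathrm{loc}(s)$ as cdgas exactly when $\mathfrak{g}=0$, that is, when $\dim G=0$ (Lemma \ref{lem:equiv_KT_model}(2)).

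For the non-existence statements we use amplitude. $\Delta\mathrm{loc}(s)$ is a derived scheme whose cotangent complex has amplitude $[-1,0]$. The cotangent complex of $[\Delta\mathrm{loc}(s)/G]$ therefore has amplitude $[-1,1]$, so its $h^{-2}$ vanishes. An étale morphism to $(Z,z)$ would induce a quasi-isomorphism on cotangent complexes at the point, which contradicts $h^{-2}=\mathrm{Lie}\ker\chi\neq0$. Similarly, a smooth atlas $U\to Z$ gives a cofibre sequence $\LL_Z|_u\to\LL_U|_u\to\LL_{U/Z}|_u$ with $\LL_{U/Z}$ concentrated in degree $0$. This forces $h^{-2}(\LL_Z|_z)\hookrightarrow h^{-2}(\LL_U|_u)=0$, again a contradiction.

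The main obstacle is the second step: we must check that the linear term of the contact moment map at the point is exactly $d\chi$, with the correct pairing against the contact line generator and no further degree $-2$ cancellation. This requires tracking signs and weights through the $\Gm$-invariant descent of Theorem \ref{thm:B}.
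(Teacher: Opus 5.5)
\textbf{Gap: the reduction to the local model needs a linearly reductive stabilizer, which Theorem \ref{thm:C} does not assume.}

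Your computation on the model is correct. At the fixed point $0$ the vector fields $a_\xi$ vanish, so the linear part of $d(w_\xi)$ is $\langle d\chi,\xi\rangle z$. The $\mathfrak{g}^\vee$ in degree $1$ does not affect $h^{-2}$. Hence $h^{-2}(\LL_{[\Delta\mathrm{loc}_G(s)/G]}|_0)\simeq\ker(d\chi)$. This is exactly Proposition \ref{prop:degree_two}(2), and it can be read straight off Definition \ref{def:equiv_discriminant} without the detour through $\widetilde{A}_G$. Your amplitude arguments for the two non-existence claims are also fine.

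The problem is your first step. Theorem \ref{thm:contact_darboux}(2) rests on the \'etale slice theorem and on Reynolds averaging. It therefore requires the stabilizer $G$ to be linearly reductive, and $Z$ to be quasi-separated with affine stabilizers. Theorem \ref{thm:C} assumes none of this. So your argument proves $h^{-2}(\LL_Z|_z)\simeq\mathrm{Lie}\,\ker(\chi)$ only at points with linearly reductive stabilizer. Consequently, the non-existence of a smooth atlas by $\Delta\mathrm{loc}(s)$ near a point with, say, a unipotent stabilizer is left unproved, because that claim needs $h^{-2}(\LL_Z|_z)\neq 0$ there.

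The missing idea is to compute intrinsically on the symplectification rather than on a local model:
\begin{enumerate}
\item Lift $z$ to $\widetilde z\in\widetilde Z(\K)$. Since $\mathrm{Aut}(z)$ acts on the fibre $p^{-1}(z)$ through $\chi$, one has $\mathrm{Aut}(\widetilde z)=\ker(\chi)$. Hence $h^1(\LL_{\widetilde Z}|_{\widetilde z})\simeq\mathfrak{k}^\vee$ with $\mathfrak{k}=\ker(d\chi)$.
\item The $-1$-shifted symplectic form gives $\LL_{\widetilde Z}\simeq\TT_{\widetilde Z}[1]$. Therefore $h^{-2}(\LL_{\widetilde Z}|_{\widetilde z})\simeq h^1(\LL_{\widetilde Z}|_{\widetilde z})^\vee\simeq\mathfrak{k}$.
\item The cotangent sequence $p^*\LL_Z\to\LL_{\widetilde Z}\to\cO$ of the $\Gm$-torsor has its third term in degree $0$. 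It therefore transfers the previous step to $h^{-2}(\LL_Z|_z)\simeq\mathfrak{k}$.
\end{enumerate}
This route uses no slice, no averaging and no hypothesis on $G$. Your model computation then serves as a consistency check, as Proposition \ref{prop:degree_two}(2) does in the paper, rather than as the proof.
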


The quotient of the discriminant
locus alone is therefore not a local model in general, and the case in which it
fails at every point is the one of enumerative interest, as given in the following theorem.

\begin{introthm}[Quivers with potential] \label{thm:D}
Let $Q$ be a finite quiver, $W$ a potential, $d$ a dimension vector, and let
$G_d = \prod_i GL_{d_i}$ act on
$V = \mathrm{Rep}(Q,d)$ with $W_d = \mathrm{Tr}(W)$ and $\chi$ trivial. Then
\begin{enumerate}[(i)]
\item $[\Delta\mathrm{loc}_{G_d}(W_d)/G_d]$ is $-1$-shifted contact (cf. Proposition \ref{prop:quiver}(1)).
\item The contact
moment map is the moment map of the doubled quiver (cf. Proposition \ref{prop:quiver}(2)),
\[ \mu_i = \sum_{h(a) = i} x_a y_a - \sum_{t(a) = i} y_a x_a . \]
\item $h^{-2}(\LL_{[\Delta\mathrm{loc}_{G_d}(W_d)/G_d]}|_x) \simeq \mathrm{Lie}\, G_x$ at every point $x$ (cf. Corollary \ref{cor:quiver}(1)). 
\item The scalars act
trivially, so no point has finite stabilizer and Theorem \ref{thm:C} applies
everywhere (cf. Corollary \ref{cor:quiver}(1)).
\item If $W$ is homogeneous the classical truncation is
$\mathrm{Rep}(\mathrm{Jac}(Q,W), d)$ (cf. Corollary \ref{cor:quiver}(2)).
\item If every cycle of $W$ has length at least
$2$, the origin lies in the classical truncation and the Koszul--Tate
presentation of $\Delta\mathrm{loc}_{G_d}(W_d)$ is minimal there in degree $-2$ (cf. Corollary \ref{cor:quiver}(3)).
\end{enumerate}
\end{introthm}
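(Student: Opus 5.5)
We prove the six items of Theorem \ref{thm:D} in order. Each one specializes the general equivariant machinery (Theorems \ref{thm:A}--\ref{thm:C}) to $V=\mathrm{Rep}(Q,d)$, $G=G_d$, $s=W_d=\mathrm{Tr}(W)$ and $\chi$ trivial.

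For (i), we note that $\mathrm{Tr}(W)$ is invariant under simultaneous conjugation, so $W_d$ is $\chi^{-1}$-semi-invariant with $\chi=1$. The contact structure on $[\Delta\mathrm{loc}_{G_d}(W_d)/G_d]$ then comes from Theorem \ref{thm:B}: its symplectification is $\dCrit([t\cdot W_d/G_d])$, which is $-1$-shifted symplectic. The $\Gm$ acting on $t$ has weight $1$ on the function $tW_d$, so the symplectic form has weight one. The equivariant quotient result of \cite[Theorem 1.1]{IzbudakBerktav2026} then descends this to a contact structure on the $\Gm$-invariants, which by Theorem \ref{thm:B}(ii) is $A_{G_d}$.

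For (ii), we unwind Definition \ref{def:equiv_discriminant}. The contact moment map pairs the fundamental vector fields of $\mathfrak g_d=\bigoplus_i\mathfrak{gl}_{d_i}$ with the degree $-1$ generators. Those generators are the fibre coordinates $y_a$ dual to the arrow coordinates $x_a$, and since $\chi=1$ they carry the cotangent action. For $\xi=(\xi_i)$ the vector field on $x_a$ is $\xi_{h(a)}x_a-x_a\xi_{t(a)}$. Pairing with $y_a$ and using $\mathrm{Tr}(\xi_{h(a)}x_ay_a)-\mathrm{Tr}(x_a\xi_{t(a)}y_a)$ gives exactly the component $\mu_i$ stated in (ii). This is the standard moment map of $T^*\mathrm{Rep}(Q,d)=\mathrm{Rep}(\overline Q,d)$. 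The only care needed is with sign and orientation conventions.

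For (iii) and (iv), Proposition \ref{prop:degree_two}(1) gives $h^{-2}(\LL|_x)\simeq\mathrm{Lie}\ker(\chi)$, where $\ker\chi$ is taken in the stabilizer $G_x$. With $\chi$ trivial this is $\mathrm{Lie}\,G_x$. The diagonal scalars $\Gm\subset G_d$ act trivially on every arrow space and on $t$, so $\dim G_x\ge1$ at every point. Hence Theorem \ref{thm:C} applies everywhere.

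For (v), the classical truncation is $\mathrm{Spec}\,H^0$. Here $H^0$ is the ring of functions on the zero locus in $T^*V$ of the degree $-1$ differentials, that is of the Euler/discriminant equations $W_d$ and $\partial W_d/\partial x_a$ together with the fibre-coordinate relations. The main obstacle is reconciling the exact generators of $\Delta\mathrm{loc}(s)$ with the Jacobi relations. We handle it as follows. Homogeneity of $W$ of degree $n$ gives the Euler identity $nW_d=\sum_a\mathrm{Tr}(x_a\,\partial W_d/\partial x_a)$, so the equation $s=0$ lies in the ideal generated by the partial derivatives. The remaining equations are the matrix entries of the cyclic derivatives $\partial_aW$, so $H^0=\K[\mathrm{Rep}(\mathrm{Jac}(Q,W),d)]$. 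The degree $-2$ generators only affect $H^{-1}$ and do not change $H^0$.

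For (vi), if every cycle of $W$ has length at least $2$, then $W_d$ and all $\partial_a W_d$ vanish at $0$ to order at least $1$. So $0$ lies in the truncation and the differential of the Koszul--Tate algebra is zero on linear terms at $0$. In particular the degree $-2$ differential $\mu$ is quadratic, so it has no constant or linear part. Therefore $\dim\mathfrak g_d=h^{-2}(\LL|_0)$, which by (iii) equals $\dim G_0=\dim G_d$. Minimality in degree $-2$ is exactly this equality of the number of degree $-2$ generators with $h^{-2}(\LL|_0)$.
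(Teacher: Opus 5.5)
Your proposal is correct and follows the paper's proof item by item. For (i) you use trace invariance plus Proposition \ref{prop:equiv_sympl}(2); for (ii) the computation $a_\xi(x_a)=\xi_{h(a)}x_a-x_a\xi_{t(a)}$ with cyclicity of the trace; for (iii)--(iv) Proposition \ref{prop:degree_two}(1) with $\chi$ trivial and the central scalars; for (v) Euler's relation; and for (vi) the vanishing at the fixed origin of the linear part of $d(w_\xi)$, which is what Proposition \ref{prop:degree_two}(2) with $d\chi=0$ encodes.

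Two slips should be corrected.

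In (v), $H^0(A_{G_d})=\cO(V)/(W_d,\ \text{entries of } \partial_a W)$ is a quotient of $\cO(V)$. It is not a zero locus in $T^*V$ with ``fibre-coordinate relations'', since the $y_a$ are degree $-1$ generators.

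In (vi), the claim that the Koszul--Tate differential has no linear terms at $0$ is false in degree $-1$ when $W$ contains a $2$-cycle; for instance $W=ba$ gives $d(y_a)=x_b$. The degree $-2$ statement you actually need does not depend on it: it follows directly from the bilinearity of $\mu$ in $(x,y)$ together with $d\chi=0$.
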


Adding a framing makes the
action free on the stable locus and the obstruction disappears there, so the
degree $-2$ generators are carried exactly by the locus of positive-dimensional
stabilizer (Remark \ref{rem:quiver_framing}).

The companion paper \cite{Izbudak_Monodromic} constructs an $\ell$-adic monodromic
perverse sheaf on an oriented $-1$-shifted contact stack from the atlas of
Theorem \ref{thm:A} and studies the resulting Legendrian invariants; it uses only
the first assertion of Theorem \ref{thm:A}, and is logically independent of the
equivariant part proved here.

\medskip

\paragraph{\bfem{Conventions and notations}.}
Let $\K$ denote an algebraically closed field of characteristic zero. All commutative differential graded algebras (cdgas) are concentrated in nonpositive degrees and defined over $\K$ \cite{PTVV}. Classical $\K$-schemes are locally of finite type, and all derived $\K$-schemes and stacks $X$ are locally finitely presented.

\section{Recollection of Prior Work}
\label{sec:review}

In this section, we review the material used in the proofs, mainly, shifted symplectic and contact
structures with their local models, the derived discriminant locus, and the
\'etale slice theorem.

\subsection{Shifted Symplectic Structures and Lagrangian Intersections}
\label{subsec:symplectic}
All constructions take place in derived algebraic geometry over $\K$ in the sense of To\"en--Vezzosi \cite{ToenHAG}; see \cite{Toen2014} for a survey. Pantev, To\"en, Vaqui\'e, and Vezzosi \cite{PTVV} define shifted symplectic structures on derived Artin stacks as follows.
\vspace{1em}

Given a derived Artin stack $X$, denote by $\mathcal{A}^{p(, \mathrm{cl})}(X, n)$ the space of \bfem{(closed) $ p$-forms of degree $n$}, respectively. Let $L_{qcoh}(X)$ be the \bfem{stable $\infty$-category of quasi-coherent complexes} on a derived stack $X$. Then the deformation theory of $X$ is defined by the \bfem{cotangent complex} $\LL_{X} \in L_{qcoh}(X)$, which is the quasi-coherent complex of derived K\"{a}hler differentials. The \bfem{tangent complex} $\TT_{X}$ is defined as the derived dual complex $\TT_X \simeq \mathbb{R}\underline{\mathcal{H}om}(\LL_X, \cO_X)$.

\begin{definition}[Shifted Symplectic Structure \cite{PTVV}]\label{def:shifted_symplectic}
Let $X$ be a derived Artin stack. An \bfem{$n$-shifted symplectic structure} on $X$ is a closed 2-form $\omega \in \mathcal{A}^{2, \mathrm{cl}}(X, n)$ of degree $n$ such that the underlying 2-form is non-degenerate. Non-degeneracy requires that the induced morphism
\[ \Theta_{\omega} \colon \TT_{X} \longrightarrow \LL_{X}[n] \]
is an equivalence in $L_{\mathrm{qcoh}}(X)$.
\end{definition}

Lagrangian structures, derived Lagrangian intersections and the Darboux theorem
for $k$-shifted symplectic stacks with $k<0$ are as in \cite{PTVV} and
\cite[Theorem 2.10]{BenBassat2015}; we use them only through their contact counterparts
below.

\subsection{Shifted Contact Structures and Legendrian Categories}
\label{subsec:contact}
We now review shifted contact structures and Legendrian morphisms \cite{kib1, kib2, IzbudakBerktav2026, IzbudakBerktav2}.

\begin{definition}[Shifted Contact Structure \cite{kib1}]\label{def:contact_structure}
An \bfem{$n$-shifted contact structure} on a derived Artin stack $X$ consists of a choice of a line bundle $\cL$ and a non-degenerate Maurer-Cartan element that defines a contact form $\alpha \in \mathcal{A}^1(X, \cL, n)$. The contact distribution is defined as the homotopy fiber $\cK \simeq \mathrm{fib}(\TT_X \xrightarrow{\alpha^\vee} \cL[n])$ in $L_{\mathrm{qcoh}}(X)$ \cite[Definition 3.6]{kib1}.
\end{definition}

\begin{theorem}[Derived Symplectification \cite{kib1, kib2}]\label{thm:symplectification}
Let $(X, \cL, \alpha)$ be an $n$-shifted contact derived stack. The \bfem{derived symplectification} $\widetilde{X}$ is an $n$-shifted symplectic derived stack equipped with a principal $\Gm$-bundle projection $p \colon \widetilde{X} \to X$ and a free $\Gm$-action that scales its $n$-shifted symplectic form $\omega_{\widetilde{X}}$ by weight 1 \cite[Theorem 3.9]{kib2}, \cite[Definition 4.3, Theorem
4.7]{kib1}.
\end{theorem}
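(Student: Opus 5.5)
The statement just given is Theorem \ref{thm:symplectification}, which is cited from \cite{kib1, kib2}. The natural plan is to reconstruct the derived symplectification directly, rather than merely quote it. Given $(X,\cL,\alpha)$, I will take $\widetilde{X}$ to be the total space of the $\Gm$-torsor of nonvanishing vectors in the dual line bundle, $\widetilde{X} := \mathrm{Tot}(\cL^{\vee}) \setminus 0_X$. Equivalently, $\widetilde{X}$ is the relative $\Spec$ of $\bigoplus_{k\in\mathbb{Z}} \cL^{k}$ over $X$, with fiber coordinate $t$ of weight $1$. The projection $p$ is then a principal $\Gm$-bundle by construction. The $\Gm$-action is free because it is the torsor action, and in particular it is smooth and affine, so $\widetilde{X}$ is again a derived Artin stack.

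The form will be built in three steps.
\begin{enumerate}
\item The pullback $p^*\cL$ is canonically trivialized by the tautological section $t$. Hence $\lambda := t\cdot p^*\alpha$ is an honest $1$-form of degree $n$ on $\widetilde{X}$, of $\Gm$-weight $1$.
\item I set $\omega_{\widetilde{X}} := d_{\DR}\lambda$. Being exact, it lifts canonically to an element of $\mathcal{A}^{2,\mathrm{cl}}(\widetilde{X},n)$, and it has weight $1$ because $d_{\DR}$ is $\Gm$-equivariant.
\item I will check that the Maurer--Cartan condition defining the contact structure in \cite{kib1} becomes, on the trivialization, precisely the statement that the closure data of $\lambda$ is compatible with the connection term $dt\wedge\alpha$. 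This gives $\omega = dt\wedge p^*\alpha + t\, p^*d\alpha$, where $d\alpha$ is understood via the connection on $\cL$.
\end{enumerate}

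Non-degeneracy is the substantive step, and I expect it to be the main obstacle. The tangent complex fits into the sequence $\cO\cdot\partial_t \to \TT_{\widetilde{X}} \to p^*\TT_X$. Combining it with the contact fiber sequence $\cK \to \TT_X \xrightarrow{\alpha^\vee} \cL[n]$ gives a three-step filtration of $\TT_{\widetilde{X}}$ with graded pieces $\cO$ (the Euler direction), $p^*\cK$, and $p^*\cL[n]$. The dual filtration on $\LL_{\widetilde{X}}[n]$ has graded pieces $p^*\cL^{\vee}$ (spanned by $\lambda$), $p^*\cK^\vee[n]$, and $\cO[n]$ (spanned by $dt$).

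I will show that $\Theta_\omega$ is strictly compatible with these filtrations, up to reversing their order. On graded pieces it should act as follows:
\begin{itemize}
\item $\partial_t \mapsto p^*\alpha$, which is an isomorphism $\cO \to p^*\cL^\vee\otimes p^*\cL$, since $p^*\cL$ is trivialized by $t$;
\item on the Reeb-type piece $p^*\cL[n]$, it maps to $dt$;
\item on $p^*\cK$, it is $t$ times the curvature pairing $d\alpha|_{\cK}\colon \cK \to \cK^\vee\otimes\cL[n]$.
\end{itemize}
The last map is an equivalence exactly by the non-degeneracy clause in Definition \ref{def:contact_structure}, and $t$ is invertible on $\widetilde{X}$. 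A map of finite filtered objects that is an equivalence on associated gradeds is an equivalence, so $\Theta_\omega$ is an equivalence.

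The delicate point is making this filtration argument homotopy-coherent in $L_{\mathrm{qcoh}}$. I would check it smooth-locally, using Berktav's contact Darboux atlas \cite[Theorem 3.7]{kib2}, where everything is an explicit cdga computation. Since non-degeneracy is local, it would then descend.

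Finally, the weight-$1$ scaling of $\omega$ follows from the weight of $t$. Freeness of the action and the torsor structure were already noted above.
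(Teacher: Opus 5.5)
The paper does not prove this statement. It imports it from \cite{kib1, kib2} and afterwards uses only three things: functoriality, compatibility with \'etale base change, and, on explicit charts, the identity $d_{\DR}(t\alpha)=\omega$ with the standard form of $\dCrit(t\cdot s)$ (Lemma \ref{lem:KT_model}, Proposition \ref{prop:equiv_sympl}(2)). In that identity non-degeneracy is inherited from Brav--Bussi--Joyce.

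Your construction is the one the paper uses implicitly: the $\Gm$-torsor with fiber coordinate $t$ of weight $1$, $\lambda=t\,p^*\alpha$ and $\omega=d_{\DR}\lambda$ (cf.\ Step 5 of Proposition \ref{prop:equiv_darboux}). Your graded analysis is also correct:
\begin{itemize}
\item the Euler field $E$ goes to $\lambda$;
\item $p^*\cK$ goes to $p^*\cK^\vee\otimes\cL[n]$ by $t$ times the curvature;
\item the Reeb quotient $p^*\cL[n]$ goes to $\cO[n]\,dt$.
\end{itemize}
The maps respect the filtrations in the order you list them. The only ``reversal'' is that $G_i$ is the annihilator of $F_{1-i}$: the Euler line is isotropic with $\omega$-orthogonal $F_1$, and $F_1/F_0=p^*\cK$ is its coisotropic reduction. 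This chart-free argument buys something the paper's chart computation does not. It uses only curvature non-degeneracy, whereas the chart computation applies only where the contact form is already literally standard.

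Two points need repair.

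First, your Step 3 is a red herring. After the tautological trivialization, $\lambda$ is an honest $1$-form of degree $n$, so $d_{\DR}\lambda$ is a closed $2$-form with no further data. No connection and no Maurer--Cartan compatibility enter closedness, and $dt\wedge\alpha+t\,d\alpha$ is only a local expression. The contact condition is used solely for non-degeneracy.

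Second, your proposed handling of coherence does not work as stated for genuine Artin stacks. You propose checking on Berktav's smooth Darboux atlas ``where everything is an explicit cdga computation''. But a smooth chart $u\colon U\to X$ is not contact, and $\widetilde U$ is not symplectic. What must be shown is that $\omega|_{\widetilde U}$ induces an equivalence $\mathrm{cofib}(\TT_{\widetilde U/\widetilde X}\to\TT_{\widetilde U})\to\mathrm{fib}(\LL_{\widetilde U}[n]\to\LL_{\widetilde U/\widetilde X}[n])$. That involves the descent data of $\omega$ along $\TT_{\widetilde U/\widetilde X}$, that is, the extra generators dual to it (in degree $-2$ for $n=-1$) and their differentials. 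The paper notes that Berktav's normal form leaves these unconstrained (Theorem \ref{thm:contact_darboux}(1), Step 3 of Proposition \ref{prop:equiv_darboux}). The charts on which they are pinned down are produced \emph{using} the symplectification, so invoking them would be circular.

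Instead, make the filtration coherent globally.
\begin{enumerate}
\item Cartan's formula gives canonically $\iota_E\,d_{\DR}\lambda=\mathrm{Lie}_E\lambda-d_{\DR}(\iota_E\lambda)=\lambda$, since $\lambda$ has weight $1$ and $\iota_E\lambda=0$.
\item Hence the composite $\TT_{\widetilde X}\to\LL_{\widetilde X}[n]\to\LL_{\widetilde X/X}[n]\simeq\cO[n]$ is $\pm\lambda$, which factors through $p^*\cL[n]$ by the isomorphism $t$.
\item This yields a map of fiber sequences reducing the claim to the induced map $F_1\to p^*\LL_X[n]$.
\item The same identity $E\mapsto\lambda$ then reduces that to the curvature map on $p^*\cK$.
\end{enumerate}
Alternatively, since both sides are perfect, it suffices to check $\Theta_\omega$ on fibers at $\K$-points, where your filtration argument is finite-dimensional linear algebra.
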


\begin{definition}[Legendrian Morphism \cite{IzbudakBerktav2026}]\label{def:legendrian}
A morphism $f \colon L \to X$ into an $n$-shifted contact stack $X$ is \bfem{Legendrian} if it is equipped with an isotropic structure $f^*\alpha \simeq 0$ inducing a stable fiber sequence $\cO_L \to \LL_{L/X} \otimes^{\mathbb{L}}_{\cO_L} f^*\cL[n-1] \to \TT_L$ \cite[Definition 2.39]{IzbudakBerktav2026}. This isotropic condition ensures that its derived symplectification $\widetilde{L} \to \widetilde{X}$ forms a $\Gm$-equivariant Lagrangian morphism \cite[proof of Theorem 4.1]{IzbudakBerktav2026}.
\end{definition}

Thus, the shifted contact structure descends to Legendrian intersections as follows.

\begin{theorem}[Derived Legendrian Intersections \cite{IzbudakBerktav2026, IzbudakBerktav2}]\label{thm:review_intersection}
Let $L_1, L_2 \to X$ be two Legendrian morphisms into an $n$-shifted contact stack $X$. The derived fiber product $L_{01} = L_1 \times_X^h L_2$ admits an $(n-1)$-shifted contact structure. Given three Legendrians $L_0, L_1, L_2$, the relative derived fiber product of two Legendrian morphisms $N_1 \to L_{01}$ and $N_2 \to L_{12}$ over $L_1$ inherits a Legendrian structure mapping into $L_{02}$.
\end{theorem}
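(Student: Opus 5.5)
The plan is to deduce Theorem \ref{thm:review_intersection} from its symplectic counterpart through symplectification (Theorem \ref{thm:symplectification}), and then descend along the free $\Gm$-actions.

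\emph{Step 1 (symplectification).} By Definition \ref{def:legendrian}, each Legendrian $L_i \to X$ symplectifies to a $\Gm$-equivariant Lagrangian $\widetilde{L}_i \to \widetilde{X}$. Here $\widetilde{X}$ is the $n$-shifted symplectic stack of Theorem \ref{thm:symplectification}, with weight-one form, and $\widetilde{L}_i = L_i \times_X \widetilde{X}$ is the pulled-back principal $\Gm$-bundle.

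\emph{Step 2 (symplectic intersection).} By the PTVV Lagrangian intersection theorem \cite{PTVV}, $\widetilde{L}_1 \times^h_{\widetilde{X}} \widetilde{L}_2$ is $(n-1)$-shifted symplectic. Its form is built from the two Lagrangian homotopies and $\omega_{\widetilde{X}}$, so it is scaled with weight one by the diagonal $\Gm$. Because $\widetilde{L}_i \to L_i$ are pullbacks of $\widetilde{X} \to X$, a base change gives
\[ \widetilde{L}_1 \times^h_{\widetilde{X}} \widetilde{L}_2 \simeq (L_1 \times^h_X L_2) \times_X \widetilde{X}, \]
and the diagonal $\Gm$-action on it is free. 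It is therefore a principal $\Gm$-bundle over $L_{01}$ carrying a weight-one $(n-1)$-shifted symplectic form.

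\emph{Step 3 (descent to contact).} I then invoke the converse direction of the contact--symplectic dictionary from \cite[Theorem 1.1]{IzbudakBerktav2026}: a principal $\Gm$-bundle with a weight-one shifted symplectic form descends to a contact structure on the base. The line bundle $\cL_{01}$ is the one associated to the $\Gm$-torsor, namely the pullback of $\cL$. This yields the $(n-1)$-shifted contact structure on $L_{01}$, and its symplectification is the bundle of Step 2.

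\emph{Step 4 (relative statement).} Apply the same strategy to $N_1 \to L_{01}$ and $N_2 \to L_{12}$. Their symplectifications are $\Gm$-equivariant Lagrangians in the symplectifications of $L_{01}$ and $L_{12}$. The known Lagrangian correspondence composition (a relative fiber product over $\widetilde{L}_1$) gives an equivariant Lagrangian $\widetilde{N}_1 \times_{\widetilde{L}_1} \widetilde{N}_2 \to \widetilde{L}_{02}$. Descending by the converse of Definition \ref{def:legendrian}, the isotropic structure being $\Gm$-invariant, gives the Legendrian structure into $L_{02}$.

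\emph{Main obstacle.} The delicate step is Step 3 together with its Legendrian analogue. One must verify that the weight-one condition and the $\Gm$-invariance of the isotropic homotopies produce, after passing to invariants, a non-degenerate contact form. Concretely, one checks that $\TT_{L_{01}}$ maps onto $\cL_{01}[n-1]$ and that the resulting fiber is non-degenerately paired, by comparing $\TT$ of the total space with $\TT$ of the base plus the Euler vector field.

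My first attempt would be to work étale-locally in Darboux charts, where the comparison reduces to an explicit check on cdgas. I would then glue using the functoriality of the symplectification equivalence.
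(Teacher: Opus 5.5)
Your Steps 1--3 are the route the paper records for this cited result in the proof of Lemma \ref{lem:KT_model}: the Legendrian null-homotopies base-change to weight-one, indeed exact, Lagrangian structures on the $\Gm$-saturated lifts $L_i \times_X \widetilde{X}$, the loop of these homotopies gives the weight-one $(n-1)$-shifted symplectic form on their intersection, and the quotient theorem \cite[Theorem 1.1]{IzbudakBerktav2026} descends it; the paper gives no further proof and defers everything, including the relative part, to \cite{IzbudakBerktav2026, IzbudakBerktav2}. Your Step 4 extends the same idea soundly once ``$\Gm$-invariant'' is read as ``of weight one'': contracting the composed Lagrangian homotopy with the Euler field gives a weight-one null-homotopy of $\lambda$ on $\widetilde{N} = N \times_X \widetilde{X}$, i.e.\ an $\cL$-valued null-homotopy of $\alpha$ on $N$, and non-degeneracy descends globally through the Atiyah sequence $\cO_N \to (p_*\TT_{\widetilde{N}})^{\Gm} \to \TT_N$ of the torsor $p \colon \widetilde{N} \to N$, so no \'etale-local gluing is needed.
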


Two facts about the symplectification $p \colon \widetilde{Z} \to Z$ of an
$n$-shifted contact stack $(Z, \cL, \alpha)$ are used throughout, and both are
formal consequences of its construction \cite[Definition 4.3, Theorem
4.7]{kib1}. First, the construction is functorial: automorphisms of the
tuple $(Z, \cL, \alpha)$ act on $\widetilde{Z}$, on the Liouville form $\lambda$
and on $\omega_{\widetilde{Z}} = d_{\DR}\lambda$, commuting with the fiberwise
$\Gm$-scaling; hence for $z \in Z(\K)$ the stabilizer $\mathrm{Aut}(z)$ acts on
the one-dimensional fiber $\cL_z$ through a character $\chi$, and on
$p^{-1}(z)$ through the same $\chi$. This $\chi$ is the character appearing in
all the statements below. Second, symplectification commutes with \'etale base
change. Principal bundles and the forms $\lambda$, $d_{\DR}\lambda$ pull back
along any morphism, and an \'etale $u \colon Y \to Z$ induces
$\TT_Y \simeq u^*\TT_Z$, so the non-degeneracy conditions pull back as well and
$\widetilde{Y} \simeq Y \times_Z \widetilde{Z}$ compatibly with $\lambda$ and
$\omega$.

\subsection{The 1-Jet Space and the Derived Discriminant Locus}
\label{subsec:discriminant}

The local models of Theorem \ref{thm:A} are built from derived discriminant
loci inside 1-jet spaces. We give their definition and Koszul--Tate
presentation, both used in Section \ref{sec:darboux}; the
extension needed over a quotient stack is Definition
\ref{def:equiv_discriminant}.

Let $V$ be a smooth affine $\K$-scheme with \'etale coordinates $x_1, \dots,
x_n$. The \bfem{1-jet space} $J^1(V) := T^*V \times \mathbb{A}^1$ carries the
coordinates $(x_i, y_i, z)$ and the canonical $0$-shifted contact form
\[ \alpha_{\mathrm{can}} = -d_{\DR}z + \sum_{i=1}^n y_i\, d_{\DR}x_i \]
with contact line bundle $\cO_{J^1(V)}$. This is the form $-d_{\DR}z + \lambda$
of \cite[Section 4.1]{kib2}, with $\lambda$ the tautological $1$-form of
$T^*V$, written in the coordinates $(x_i, y_i, z)$. Classically, it is the standard
contact form \cite[Example 3.3]{kib1}. 

The
\bfem{1-jet prolongation} of a regular function $s \in \cO(V)$ is the section
\[ j^1 s \colon V \longrightarrow J^1(V), \qquad
x \longmapsto (x,\, d s(x),\, s(x)), \]
and $z_V \colon V \to J^1(V)$, $x \mapsto (x, 0, 0)$, denotes the zero section.
Note that, both are Legendrian in the sense of Definition \ref{def:legendrian}, since
$(j^1 s)^* \alpha_{\mathrm{can}} = -d_{\DR}s + d_{\DR}s = 0$ and
$z_V^* \alpha_{\mathrm{can}} = 0$.

\begin{definition}[Derived discriminant locus]\label{def:discriminant}
The \bfem{derived discriminant locus} of $s \in \cO(V)$ is the derived
intersection of the 1-jet prolongation with the zero section,
\[ \Delta\mathrm{loc}(s) := V \times^h_{j^1 s,\; J^1(V),\; z_V} V . \]
By Theorem \ref{thm:review_intersection} it carries a canonical $-1$-shifted
contact structure.
\end{definition}

\begin{lemma}[Koszul--Tate model]\label{lem:KT_model}
Let $A$ be the quasi-free cdga generated over $\cO(V)$ by degree $-1$ generators
$y_1, \dots, y_n$ and $z$, with differential
\[ d(z) = s, \qquad d(y_i) = \frac{\partial s}{\partial x_i}, \]
and let $\alpha := d_{\DR}z + \sum_{i=1}^n y_i\, d_{\DR}x_i \in
\mathcal{A}^1(\Spec(A), \cO, -1)$. Then there is an equivalence
\[ \Spec(A) \;\simeq\; \Delta\mathrm{loc}(s) \]
of $-1$-shifted contact derived schemes under which $\alpha$ corresponds to the
contact form induced by $\alpha_{\mathrm{can}}$. The classical truncation of
$\Delta\mathrm{loc}(s)$ is the closed subscheme
$\{ s = 0 \} \cap \mathrm{Crit}(s) \subseteq V$.
\end{lemma}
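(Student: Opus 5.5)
The derived intersection $\Delta\mathrm{loc}(s)$ is a homotopy pullback of affines. I would compute it by resolving one of the two maps into $J^1(V)$ and then compare contact forms.

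\textbf{Step 1: the underlying cdga.} Write $\cO(J^1(V)) = \cO(V)[y_1,\dots,y_n,z]$, which is smooth over $\cO(V)$ via the projection. The zero section $z_V$ is the closed immersion cut out by the regular sequence $(y_1,\dots,y_n,z)$. Its Koszul complex $\cO(J^1(V))[\eta_1,\dots,\eta_n,\zeta]$, with $d\eta_i = y_i$ and $d\zeta = z$ in degree $-1$, is a quasi-free resolution of $\cO(V)$ as an $\cO(J^1(V))$-algebra. The derived tensor product along $(j^1 s)^*$ is therefore computed by base changing this resolution along $y_i \mapsto \partial s/\partial x_i$ and $z \mapsto s$. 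This yields $\cO(V)[\eta_i,\zeta]$ with $d\eta_i = \partial_i s$ and $d\zeta = s$. Renaming $\eta_i \mapsto y_i$ and $\zeta \mapsto z$ gives exactly $A$, so $\Spec(A) \simeq \Delta\mathrm{loc}(s)$ as derived schemes. Taking $H^0$ gives $\cO(V)/(s,\partial_1 s,\dots,\partial_n s)$, which is the classical truncation $\{s=0\}\cap\mathrm{Crit}(s)$.

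\textbf{Step 2: the contact form.} The $-1$-shifted contact form of Theorem \ref{thm:review_intersection} comes from the two isotropic structures. One is the canonical null-homotopy $h_1$ of $(j^1 s)^*\alpha_{\mathrm{can}} = -d_{\DR}s + d_{\DR}s$. The other is the trivial null-homotopy $h_2$ of $z_V^*\alpha_{\mathrm{can}} = 0$. On the fiber product the two pullbacks of $\alpha_{\mathrm{can}}$ are identified through the homotopy between $j^1 s\circ\pi_1$ and $z_V\circ\pi_2$. The induced degree $-1$ one-form is the loop obtained by concatenating $h_1$, this identification, and $h_2^{-1}$.

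To make this explicit, I would write the homotopy in the Koszul model. It is given by the degree $-1$ elements $y_i,z$ with $d y_i = \partial_i s - 0$ and $dz = s - 0$. The one-form $\alpha_{\mathrm{can}} = -d_{\DR}z + \sum y_i\, d_{\DR}x_i$ transported along it contributes $-d_{\DR}(\text{homotopy for } z)$ plus $\sum(\text{homotopy for } y_i)\,d_{\DR}x_i$. The terms involving $d_{\DR}s$ cancel against $h_1$, because $d_{\DR}s = \sum \partial_i s\, d_{\DR}x_i$. The result is $\pm d_{\DR}z + \sum y_i\, d_{\DR}x_i$. The sign of the $z$-term depends on the orientation convention for the loop; the statement's $+d_{\DR}z$ matches reversing the orientation of $\zeta$, which is an automorphism of $A$ preserving the identification up to $z\mapsto -z$, $s\mapsto -s$ on the differential. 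I would fix this by choosing $d\zeta = -z$ if needed.

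\textbf{Step 3: non-degeneracy.} Non-degeneracy is automatic from Theorem \ref{thm:review_intersection}. It can also be checked directly: $\LL_A$ is the complex $\bigoplus A\,d_{\DR}y_i \oplus A\,d_{\DR}z \to \bigoplus A\,d_{\DR}x_i$. Pairing with $\alpha$ identifies the contact distribution with the symmetric Hessian complex, which is self-dual up to shift.

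The main obstacle is Step 2: tracking signs and homotopies precisely enough to identify the induced form with $\alpha$ on the nose, rather than up to a rescaling or automorphism. I would try first to do the computation on the symplectification, where $\widetilde{\Delta\mathrm{loc}(s)}$ is $\dCrit(t s)$. There the standard BBJ Darboux form has the shape $\sum d y_i\, d x_i + d t\, d z$, and setting $t = 1$ in its Liouville primitive gives $\alpha$.
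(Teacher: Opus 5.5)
Your Step 1 is exactly the paper's argument: the Koszul resolution of $z_V$ by the regular sequence $(y_1,\dots,y_n,z)$ is base-changed along $j^1 s$, the identification is the identity on generators, and the truncation follows as you say. The gap is in Step 2.

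\textbf{The induced form is defined on the symplectification.} The contact form ``induced by $\alpha_{\mathrm{can}}$'' is not defined as a loop of $1$-forms on $\Delta\mathrm{loc}(s)$. As the paper's proof recalls, the construction runs in three stages:
\begin{enumerate}
\item the Legendrian null-homotopies are base-changed to exact Lagrangian structures on the $\Gm$-lifts of $j^1 s$ and $z_V$ in $J^1(V)\times\Gm$;
\item their loop transgresses to the $-1$-shifted symplectic form on the intersection of the lifts;
\item the weight-one quotient theorem descends that form.
\end{enumerate}
Your directly computed loop $\beta$ is therefore only a candidate until you compare it with this descent. One way to do so: the Liouville loop is $t\beta$; transgression commutes with $d_{\DR}$; and the descended form is the reduction of $\iota_{t\partial/\partial t}\, d_{\DR}(t\beta) = t\beta$, by Cartan's formula, since $t\beta$ has weight one and $\iota_{t\partial/\partial t}\beta = 0$. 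The same comparison is what would make your non-degeneracy claim ``automatic''. With it supplied, your route would be shorter than the paper's, because it avoids any change of polarization.

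\textbf{Your fallback omits the key step.} The fallback is the paper's route, but you assert that the symplectification is $\dCrit(t\cdot s)$ \emph{with its standard form} without the idea that proves it. The paper argues as follows:
\begin{itemize}
\item It sets $u_i = t y_i$, $\tau = -t$, so that $J^1(V)\times\Gm$ becomes the locus $\tau\neq 0$ in $T^*(V\times\mathbb{A}^1_z)$, with $t\,\alpha_{\mathrm{can}}$ the tautological form.
\item The two lifts are the punctured conormals of $\{z = s(x)\}$ and $\{z=0\}$.
\item Passing to the polarization with base $(x_i,\tau)$ changes the tautological form by the exact term $-d_{\DR}(\tau z)$. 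This turns the first lift into the graph of $d_{\DR}(ts)$ with primitive $ts$, and the second into the zero section with primitive $0$.
\end{itemize}
Only then does the standard form $\omega = d_{\DR}t\, d_{\DR}\zeta + \sum_i d_{\DR}\widetilde{y}_i\, d_{\DR}x_i$ apply, with $\widetilde{y}_i = t y_i$ and $\zeta = z$. Its contraction with $t\,\partial/\partial t$ is $t\alpha$.

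\textbf{The sign discussion should be discarded.} Its reasoning is incorrect on several counts:
\begin{itemize}
\item The relative sign between $d_{\DR}z$ and $\sum_i y_i\, d_{\DR}x_i$ is not a matter of loop orientation. Your output must be a cycle for the internal differential $d$ on $\Omega^1_A$, and with $dz = s$, $dy_i = \partial s/\partial x_i$ this pins down the relative sign: it is the one for which the statement's $\alpha$ is a cycle. Reversing the loop only flips the overall sign.
\item $z\mapsto -z$ is not an automorphism of $A$, since $d(-z) = -s$. Re-choosing $d\zeta = -z$ in the resolution gives a presentation with $dz = -s$, not the algebra of the statement.
\item In the strict Koszul model there is nothing for $h_1$ to cancel. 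The form $(j^1 s)^*\alpha_{\mathrm{can}} = -d_{\DR}s + \sum_i (\partial s/\partial x_i)\, d_{\DR}x_i$ vanishes on the nose, so $h_1 = h_2 = 0$. The loop is entirely the transport of $\alpha_{\mathrm{can}}$ along the homotopy given by the generators $y_i$, $z$ of $A$, and this produces $\pm\alpha$ directly.
\end{itemize}
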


\begin{proof}
The zero section $z_V$ is cut out in $J^1(V)$ by the regular sequence
$(y_1, \dots, y_n, z)$, so the derived intersection of Definition
\ref{def:discriminant} is presented by the derived tensor product
\[ \cO(V) \otimes^{\LL}_{\cO(J^1(V))} \cO(V) \]
in which one factor is resolved by the Koszul complex on that sequence. Pulling
the Koszul complex back along $j^1 s$ replaces its generators by the functions
\[ (j^1 s)^* y_i = \frac{\partial s}{\partial x_i}, \qquad (j^1 s)^* z = s, \]
so the resolution is the quasi-free cdga over $\cO(V)$ on degree $-1$ generators
$y_i$, $z$ with $d(y_i) = \partial s / \partial x_i$ and $d(z) = s$, which is
$A$. This identification is the identity on the generators $x_i, y_i, z$.

For the contact form, both sections are isotropic, so the induced structure on
the derived intersection is not a restriction of $\alpha_{\mathrm{can}}$. By the
proof of \cite[Theorem 4.1]{IzbudakBerktav2026}, the Legendrian null-homotopies
base-change to exact Lagrangian structures on the $\Gm$-saturated lifts of the
two sections in the symplectification of $J^1(V)$, the loop of these homotopies
transgresses to the $-1$-shifted symplectic structure on the derived
intersection of the lifts, and the quotient theorem
\cite[Theorem 1.1]{IzbudakBerktav2026} descends it. We compute this structure by
generating functions. The symplectification is $J^1(V) \times \Gm$ with
Liouville form $t\,\alpha_{\mathrm{can}}$. Setting \[u_i := t y_i, \quad \text{and} \quad \tau := -t\] identifies it with the locus $\tau \neq 0$ in
$T^*(V \times \mathbb{A}^1_z)$, carrying $t\,\alpha_{\mathrm{can}} =
\sum_i u_i\, d_{\DR}x_i + \tau\, d_{\DR}z$ to the tautological form. The lift
of $j^1 s$ is the punctured conormal of $\{z = s(x)\}$, the lift of $z_V$ that
of $\{z = 0\}$, and the base-changed homotopies are the strict vanishing of the
tautological form on each conormal. 

In the polarization with base
$(x_i, \tau)$ and momenta $(u_i, -z)$ the tautological form changes by the
exact term $-d_{\DR}(\tau z)$, so the first lift becomes the graph of
$d_{\DR}(t \cdot s)$ with its canonical primitive $t \cdot s = -\tau s$, and
the second the zero section with primitive $0$: the intersection is
$\dCrit(t \cdot s)$ over $V \times \Gm$ with its standard Koszul--Tate form
\cite[Example 5.15]{Brav}. 

Concretely, with \[\widetilde{y}_i := t y_i \quad \text{and} \quad \zeta := z \,\text{in}\, A[t, t^{-1}],\] 
the generators satisfy \[d(\widetilde{y}_i) =
\partial(ts)/\partial x_i\quad \text{and}\quad d(\zeta) = \partial(ts)/\partial t,\] with
\[\omega = d_{\DR}t\, d_{\DR}\zeta + \sum_i d_{\DR}\widetilde{y}_i\,
d_{\DR}x_i,\] all of it $\Gm$-equivariantly for the weight-one scaling of $t$.

Contracting $\omega$ with the fundamental vector field, as in the construction
of \cite[Theorem 1.1]{IzbudakBerktav2026}, gives
\[\iota_{t\,\partial/\partial t}\,\omega = t\,\bigl(d_{\DR}z + \sum_i y_i\,
d_{\DR}x_i\bigr),\] of weight one and its reduction under the trivialization by
$t$ is the displayed \[\alpha = d_{\DR}z + \sum_i y_i\, d_{\DR}x_i,\] the contact
Darboux normal form of \cite[Section 3]{kib1} (where the distinguished
generator is $-z$), \cite[Theorem 3.7]{kib2} with Hamiltonian $s$. Proposition \ref{prop:equiv_sympl} below extends this
computation equivariantly. Taking $H^0$ of $A$ imposes the
relations $\partial s/\partial x_i = 0$ and $s = 0$, giving the stated classical
truncation.
\end{proof}

The derived discriminant locus differs from the derived critical locus by the
additional equation $s = 0$, resolved by the generator $z$. Equivalently,
$\Delta\mathrm{loc}(s)$ is the derived zero locus of the $1$-jet of $s$, whereas
$\dCrit(s)$ is the derived zero locus of $d s$. The generator $z$ carries the
contact direction, and Step 3 of the proof of Theorem \ref{thm:contact_darboux}
identifies the symplectification of $\Delta\mathrm{loc}(s)$ with
$\dCrit(t \cdot s)$. Over a quotient stack, both sides acquire further
Koszul--Tate generators in degree $-2$ and the resulting model is Definition
\ref{def:equiv_discriminant}.

\subsection{Quivers with Potential}
\label{subsec:quiverprelim}

The application in Section \ref{subsec:quivers} uses the following standard
notions, for which we follow \cite{Ginzburg}.

\begin{definition}[Quiver and path algebra]\label{def:quiver}\mbox{} \medskip
\begin{enumerate}[(i)]
\item A \bfem{quiver} is a datum $Q = (Q_0, Q_1, t, h)$ consisting of two finite sets,
the \bfem{vertices} $Q_0$ and the \bfem{arrows} $Q_1$, together with two maps
$t, h \colon Q_1 \to Q_0$. 

\item We write $a \colon t(a) \to h(a)$ and call $t(a)$ and
$h(a)$ the \bfem{tail} and the \bfem{head} of $a$. 

\item A \bfem{path} of length
$\ell \geq 1$ is a word $a_\ell \cdots a_1$ in the arrows with
$h(a_j) = t(a_{j+1})$ for $1 \leq j < \ell$, and for each vertex $i$ there is a
path $e_i$ of length $0$. 
\item A path is a \bfem{cycle} if $h(a_\ell) = t(a_1)$. \item The \bfem{path algebra} $\K Q$ is the $\K$-vector space with basis the paths, with
multiplication given by concatenation where composable and by $0$ otherwise. It
is associative with unit $\sum_{i \in Q_0} e_i$.
\end{enumerate}
\end{definition}

\begin{definition}[Representations]\label{def:quiver_rep}
A \bfem{representation} of $Q$ of \bfem{dimension vector}
$d \in \mathbb{N}^{Q_0}$ is a choice of linear map $x_a \colon \K^{d_{t(a)}} \to
\K^{d_{h(a)}}$ for every arrow $a$, equivalently a $\K Q$-module structure on
$\bigoplus_i \K^{d_i}$ in which $e_i$ acts as the projection onto $\K^{d_i}$.
These form the affine space
\[ \mathrm{Rep}(Q, d) := \bigoplus_{a \in Q_1}
\Hom\bigl(\K^{d_{t(a)}}, \K^{d_{h(a)}}\bigr) , \]
on which $G_d := \prod_{i \in Q_0} GL_{d_i}$ acts by
$(g \cdot x)_a = g_{h(a)}\, x_a\, g_{t(a)}^{-1}$, the orbits being the
isomorphism classes of $d$-dimensional representations. 

For an algebra
$\K Q / I$ we write $\mathrm{Rep}(\K Q/I, d) \subseteq \mathrm{Rep}(Q,d)$ for
the closed subscheme where the relations in $I$ vanish.
\end{definition}

\begin{definition}[Potential, cyclic derivative, Jacobi algebra]\mbox{}\medskip
\label{def:potential}
\begin{enumerate}[(i)]
\item A \bfem{potential} is an element $W \in \K Q / [\K Q, \K Q]$. The quotient has
as a basis the cycles of $Q$ taken up to cyclic rotation, so $W$ is a finite
$\K$-linear combination of such. 
\item For $a \in Q_1$ the \bfem{cyclic derivative}
$\partial_a \colon \K Q / [\K Q, \K Q] \to \K Q$ is the linear map given on a
cycle $c = a_\ell \cdots a_1$ by
\[ \partial_a c := \sum_{j \,:\, a_j = a}
a_{j-1} \cdots a_1\, a_\ell \cdots a_{j+1} , \]
a path from $h(a)$ to $t(a)$. 
\item The \bfem{Jacobi algebra} of $(Q, W)$ is
\[ \mathrm{Jac}(Q, W) := \K Q / \bigl( \partial_a W : a \in Q_1 \bigr) . \]
\end{enumerate}
\end{definition}

A cyclic path evaluates on a representation $x$ to an endomorphism of the vector
space at the vertex where it begins and ends, and its trace does not change under
cyclic rotation, so a potential induces a regular function
\[ W_d := \mathrm{Tr}(W) \in \cO(\mathrm{Rep}(Q,d))^{G_d} , \]
invariant because trace is a conjugation invariant. Differentiating in the
entries of $x_a$ against the trace pairing gives $\partial W_d / \partial x_a =
\partial_a W$ evaluated on $x$, so
\begin{equation}\label{eq:crit_jac}
\mathrm{Crit}(W_d) = \mathrm{Rep}\bigl(\mathrm{Jac}(Q,W), d\bigr)
\end{equation}
\cite[Section 2.3]{Ginzburg}. This is the local model of quiver
Donaldson--Thomas theory \cite{Szendroi}.

\begin{definition}[Doubled quiver]\label{def:doubled}
The \bfem{doubled quiver} $\overline{Q}$ has $\overline{Q}_0 = Q_0$ and one
arrow $a^* \colon h(a) \to t(a)$ for each $a \in Q_1$ in addition to the arrows
of $Q$.
\end{definition}

One has $\mathrm{Rep}(\overline{Q}, d) = T^*\mathrm{Rep}(Q, d)$ as a symplectic
$G_d$-representation, with moment map
\begin{equation}\label{eq:preproj}
\mu_i(x, y) = \sum_{h(a) = i} x_a y_{a^*} \; - \sum_{t(a) = i} y_{a^*} x_a ,
\qquad i \in Q_0 ,
\end{equation}
whose zero locus cuts out the representations of the preprojective algebra
\cite{CB}. The expression \eqref{eq:preproj} is the one that will appear in
degree $-2$ in Section \ref{subsec:quivers}, with the cotangent variables
$y_{a^*}$ replaced by Koszul--Tate generators of degree $-1$.

\subsection{\'Etale Slices and Equivariant Darboux Models}\label{subsec:slices}

Recall that an affine algebraic group $G$ over $\K$ is \bfem{linearly reductive} if the
category of its rational representations is semisimple \cite[Section 1.5]{AHR}. In characteristic zero
this coincides with reductivity. Semisimplicity makes the
$\chi$-isotypic projector $R_\chi$ on rational $G$-modules exact and
functorial for every character $\chi$ of $G$, and we write $R :=
R_{\mathrm{triv}}$ for the Reynolds operator. The consequences used in Section
\ref{sec:darboux} are the following.

\begin{lemma}[Equivariant lifting principle]\label{lem:reynolds}
Let $G$ be linearly reductive over $\K$, $\mathrm{char}\, \K = 0$, and let
$\chi$ be a character of $G$.
\begin{enumerate}
\item Every surjection $M \twoheadrightarrow N$ of rational $G$-modules admits a
$G$-equivariant section, and every $\chi$-semi-invariant element of $N$ lifts to
a $\chi$-semi-invariant element of $M$.
\item Let $(B, d)$ be a cdga with $G$-action and let $h$ be a contracting
homotopy for a $G$-stable acyclic subcomplex, or a homotopy witnessing the
exactness of a $\chi$-semi-invariant closed form. Then $R(h)$, respectively
$R_\chi(h)$, is a $G$-equivariant homotopy witnessing the same identity.
\end{enumerate}
\end{lemma}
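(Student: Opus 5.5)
The plan is to deduce both parts from semisimplicity of rational $G$-modules, i.e.\ from exactness and functoriality of the isotypic projectors $R_\chi$ and of the Reynolds operator $R = R_{\mathrm{triv}}$.

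For part (1), we decompose every rational $G$-module into isotypic components, $M = \bigoplus_{\rho} M_\rho$, and use that $G$-equivariant maps preserve this decomposition. Let $\pi\colon M \twoheadrightarrow N$ be a surjection of rational $G$-modules. Its kernel $K$ is a $G$-submodule, so by semisimplicity it has a $G$-stable complement $C$, with $M = K \oplus C$. Then $\pi|_C\colon C \to N$ is a $G$-equivariant isomorphism, and its inverse is the required $G$-equivariant section $\sigma$.

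If $n \in N$ is $\chi$-semi-invariant, then $\sigma(n)$ is $\chi$-semi-invariant by equivariance, which gives the lift. Alternatively, choose any lift $m$ and take $R_\chi(m)$. Since $R_\chi$ commutes with $\pi$, we get $\pi(R_\chi m) = R_\chi(n) = n$. This second route does not even need $\sigma$, only that $R_\chi$ is a functorial projector onto the $\chi$-isotypic part. That part is the span of $\chi$-semi-invariants, because $\chi$ is one-dimensional.

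For part (2), let $G$ act on $\Hom(B,B)$ by conjugation, $g\cdot h = g\circ h\circ g^{-1}$. This action is rational on the relevant finite pieces, since $B$ is a rational $G$-module and we may restrict to finite-dimensional $G$-stable subspaces degreewise. We then define $R(h)$ by averaging: $R(h)$ is the component of $h$ in the trivial isotypic part, computed piecewise on each finite-dimensional $G$-stable subspace. The identity to be preserved is linear in $h$ and $G$-equivariant, and that is all the argument uses.

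\begin{enumerate}
\item \emph{Contracting homotopy.} Let $S\subseteq B$ be $G$-stable and acyclic with $dh+hd=\mathrm{id}_S$. Both $d$ and $\mathrm{id}_S$ are $G$-invariant, so $h\mapsto dh+hd$ is a $G$-equivariant linear map. Applying $R$ gives $dR(h)+R(h)d = R(\mathrm{id}_S) = \mathrm{id}_S$.
\item \emph{Exactness of a form.} Let $\beta$ be a $\chi$-semi-invariant closed form with $\beta = d\eta$, where $d$ is the total (internal plus de Rham) differential. Since $d$ commutes with the $G$-action, it commutes with $R_\chi$, so $d R_\chi(\eta) = R_\chi(\beta) = \beta$. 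Hence $R_\chi(\eta)$ is a $\chi$-semi-invariant primitive.
\end{enumerate}

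The main obstacle is making sense of $R$ and $R_\chi$ on infinite-dimensional objects: homotopies $h \in \Hom(B,B)$, and forms in the completed de Rham complex. My approach is to work degreewise and weight-by-weight, use that rational modules are unions of finite-dimensional submodules, and define the projectors on $\Hom$ by projecting $h$ restricted to each finite-dimensional $G$-stable subspace. Naturality makes these restrictions compatible. For closed forms in the completed weight tower we apply $R_\chi$ weight by weight, which commutes with the limits because each weight piece is a rational $G$-module and $R_\chi$ is exact.
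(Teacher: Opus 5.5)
Your proposal is correct and follows essentially the same route as the paper. For (1) you take a $G$-stable complement to the kernel, or apply $R_\chi$ to an arbitrary lift, and for (2) you apply the projector to an identity that is linear in $h$ and equivariant in the remaining terms; your definition of $R$ and $R_\chi$ on $\Hom(B,B)$ through the finite-dimensional rational pieces $\Hom(U,U')$, and weight by weight on the de Rham tower, makes explicit a point the paper leaves implicit, since $\Hom(B,B)$ is not in general a rational $G$-module.
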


\begin{proof}~
\begin{enumerate}
\item By semisimplicity, the kernel admits a $G$-stable complement. Applying the
exact projector $R_\chi$ to any set-theoretic lift of a semi-invariant element
produces a semi-invariant lift, since $G$-equivariant maps commute with
$R_\chi$.
\item The defining identity of a homotopy, $dh + hd = \iota$ or $h \mapsto dh =
\beta$, is linear in $h$ and $G$-equivariant in the other terms (for the second,
$\chi$-semi-invariant). Applying the exact projector to the identity replaces
$h$ by $R(h)$ or $R_\chi(h)$ without changing the right-hand side.
\end{enumerate}
\end{proof}

Averaging also trivializes an equivariant line bundle near a fixed point, at
the cost of remembering a character. Let $G$ act on an affine derived
$\K$-scheme $S$ of finite presentation fixing a classical point $s_0$, and let
$\cL$ be a $G$-equivariant line bundle on $S$ on whose fiber $\cL_{s_0}$ the
group acts through $\chi$. The global sections $\Gamma(S, \cL)$ form a rational
$G$-module and evaluation at $s_0$ is a surjection of $G$-modules, so exactness
of the $\chi$-isotypic projector produces a $\chi$-semi-invariant section $e$
with $e(s_0) \neq 0$ whose non-vanishing locus is a $G$-invariant affine open
$S' \ni s_0$ on which $e$ trivializes $\cL$, giving
$\cL|_{S'} \simeq \cO_{S'} \otimes \chi$ equivariantly, with associated
principal $\Gm$-bundle $S' \times \Gm$ and $G$-action
$g \cdot (x, t) = (g \cdot x, \chi(g) t)$. We refer to this as the
\bfem{semi-invariant trivialization} and use it without further comment.

The same averaging makes the minimal standard form presentations of
\cite[Section 4]{Brav} equivariant.
\vspace{1em}

\begin{proposition}[Equivariant minimal standard form]
\label{prop:equiv_standard_form}
Let $G$ be linearly reductive over $\K$, let $B$ be a cdga of finite
presentation in nonpositive degrees with $G$-action, and let $s_0$ be a
$G$-fixed classical point of $\Spec(B)$. Then $B$ admits, \'etale locally at
$s_0$, a $G$-equivariant minimal standard form presentation. 

That is, $B$ admits a quasi-free cdga
$A$ whose generators in each degree form finite-dimensional
$G$-representations, whose degree-$0$ generators realize the $G$-module
$\mathfrak{m}_{s_0}/\mathfrak{m}_{s_0}^2$, together with an equivariant
differential and an equivariant equivalence $A \xrightarrow{\sim} B$.
\end{proposition}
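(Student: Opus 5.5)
We build the presentation degree by degree, following the construction of minimal standard forms in \cite[Section 4]{Brav}, and replace every choice made there by an equivariant one using Lemma \ref{lem:reynolds}. First we localize. Since $s_0$ is $G$-fixed, $\mathfrak{m}_{s_0} \subseteq H^0(B)$ is a $G$-stable ideal, and $\mathfrak{m}_{s_0}/\mathfrak{m}_{s_0}^2$ is a finite-dimensional rational $G$-module. By Lemma \ref{lem:reynolds}(1), the surjection $\mathfrak{m}_{s_0} \twoheadrightarrow \mathfrak{m}_{s_0}/\mathfrak{m}_{s_0}^2$ has a $G$-equivariant section. Lifting further along $B^0 \twoheadrightarrow H^0(B)$ by the same lemma, we obtain a $G$-equivariant map
\[ \mathrm{Sym}\bigl(\mathfrak{m}_{s_0}/\mathfrak{m}_{s_0}^2\bigr) \to B^0 . \]
This gives an equivariant morphism $\Spec H^0(B) \to \mathbb{A} := (\mathfrak{m}_{s_0}/\mathfrak{m}_{s_0}^2)^\vee$ that is unramified at $s_0$. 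Embedding $\Spec H^0(B)$ into a smooth affine $G$-scheme and taking a $G$-stable complement to the tangent space of the image, we get a smooth $G$-invariant affine $A^0$. We then pass to a $G$-stable affine open neighbourhood, chosen as the non-vanishing locus of an invariant function obtained by applying the Reynolds operator $R$, on which this is a closed embedding étale-locally. Concretely, the classical Luna/étale slice step gives a $G$-equivariant étale neighbourhood $A^0$ of the image point, with $B^0$-data receiving an equivariant surjection $A^0 \to H^0(B)$. Its cotangent space at $s_0$ is exactly $\mathfrak{m}_{s_0}/\mathfrak{m}_{s_0}^2$ as a $G$-module.

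Next we proceed inductively, as in Brav--Bussi--Joyce. Suppose $A_{(k)}$ is a quasi-free $G$-cdga with generators in degrees $0,\dots,-k$, each degree a finite-dimensional representation, together with an equivariant map $A_{(k)} \to B$ that is an isomorphism on $H^{>-k}$ and surjective on $H^{-k}$. The obstruction to extending is the cokernel on $H^{-k-1}$ together with the kernel on $H^{-k}$. Both are finitely generated $H^0$-modules with $G$-action, and their fibers at $s_0$ are finite-dimensional $G$-modules. Minimality means we adjoin, in degree $-k-1$, exactly a $G$-module $E_{k+1}$ isomorphic to the fiber at $s_0$ of the relevant cone cohomology. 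We choose equivariant lifts of generators and their differentials, which are cycles or classes mapping correctly, using Lemma \ref{lem:reynolds}(1). We choose equivariant null-homotopies in $B$ using Lemma \ref{lem:reynolds}(2): averaging the homotopy by $R$ keeps the identity $d h = \beta$ valid because $\beta$ is invariant. By Nakayama's lemma, the resulting map becomes surjective and injective in the required degrees after shrinking to a $G$-stable open neighbourhood of $s_0$, again cut out by an invariant function.

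Finally, finite presentation of $B$ ensures that the Tor-amplitude of $\LL_B$ at $s_0$ is bounded. The process therefore terminates after finitely many steps: minimality forces the degree-$(-k)$ generators to realize $h^{-k}(\LL_B|_{s_0})$, which vanishes for $k$ large. The resulting $A$ is an equivariant equivalence onto $B$ over the invariant neighbourhood.

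The main obstacle is the degree-$0$ step, namely producing the smooth equivariant étale chart whose cotangent space at $s_0$ is exactly $\mathfrak{m}_{s_0}/\mathfrak{m}_{s_0}^2$ rather than something larger. This needs an equivariant étale slice (Luna, or Alper--Hall--Rydh \cite{AHR}) rather than only Zariski localization. Our first approach is to use the equivariant linearization $\Spec H^0(B) \to \mathbb{A}$ given by the averaged section, check that it is unramified at the fixed point, and apply the Luna fundamental lemma to get an invariant open where it is a closed immersion into an étale neighbourhood of $0 \in \mathbb{A}$.
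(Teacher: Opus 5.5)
Your inductive step is the paper's argument. Every choice in the degree-by-degree construction of \cite[Section 4]{Brav} is averaged by Lemma \ref{lem:reynolds}, the generators adjoined in degree $-k-1$ are a $G$-stable lift of the fiber at $s_0$ of the obstruction module, and Nakayama is applied after inverting an invariant function.

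You differ in degree $0$. There you are right, and more careful than the paper's one-line choice: a $G$-stable lift of $\mathfrak{m}_{s_0}/\mathfrak{m}_{s_0}^2$ only gives a map $\pi\colon X := \Spec H^0(B) \to \mathbb{A}$ that is unramified at $s_0$. It is not a surjection onto $H^0(B)$, so the induction cannot start from it alone. However, no slice theorem is needed. Luna's fundamental lemma concerns \'etale maps, so it would not turn the unramified $\pi$ into a closed immersion in any case. The Zariski part of your sketch already suffices:
\begin{enumerate}[(i)]
\item embed $X$ equivariantly in a representation $V'$ whose coordinates include the averaged lift;
\item use Lemma \ref{lem:reynolds}(1) to choose a $G$-stable finite-dimensional subspace $E$ of the ideal of $X$ mapping isomorphically onto the conormal space at $s_0$;
\item let $U_0 := V(E) \cap D(f)$ for an invariant $f = R(f_0)$ with $f(s_0) \neq 0$, chosen so that $U_0$ is smooth and $U_0 \to \mathbb{A}$ is \'etale.
\end{enumerate}
Then $\iota\colon X \cap U_0 \hookrightarrow U_0$ is a closed immersion and $T^*_{s_0} U_0 \cong \mathfrak{m}_{s_0}/\mathfrak{m}_{s_0}^2$.

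The step that fails as written is the base of the induction. Your hypothesis requires an equivariant cdga map $A_{(0)} \to B$, but you only produce $\cO(U_0) \to H^0(B)$. This need not lift to $B^0$: the elements of $E$ vanish in $H^0(B)$, but inside $B^0$ they only lie in $d(B^{-1})$. This is where \'etale localization actually enters. You do have the strict equivariant map $\mathrm{Sym}(\mathfrak{m}_{s_0}/\mathfrak{m}_{s_0}^2) \to B^0$. So pass to $B' := B \otimes_{\cO(\mathbb{A})} \cO(U_0)$, which is \'etale over $B$ by flat base change and receives $\cO(U_0)$ strictly. On truncations, $x \mapsto (x, \iota(x))$ is a $G$-equivariant section of the \'etale projection $(X \cap U_0) \times_{\mathbb{A}} U_0 \to X \cap U_0$. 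It is therefore an open immersion, and on its image the second projection is the closed immersion $\iota$. Now localize $B'$ and $\cO(U_0)$ at suitable invariant functions around the fixed point $(s_0, \iota(s_0))$. This gives a $G$-equivariant \'etale neighbourhood of $s_0$, in fact equivalent to a Zariski open of $\Spec B$. It receives a strict equivariant map from a smooth degree-$0$ algebra, surjective on $H^0$, with cotangent space $\mathfrak{m}_{s_0}/\mathfrak{m}_{s_0}^2$ at the fixed point. From there your induction runs as you describe.
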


\begin{proof}
We build $A$ by the standard induction on degree. In degree $0$, choose a
$G$-stable lift of $\mathfrak{m}_{s_0}/\mathfrak{m}_{s_0}^2$ to
generators, which is possible by Lemma \ref{lem:reynolds}(1). Assume
$A^{\geq -k} \to B$ has been constructed equivariantly and is
$(k{+}1)$-connective. The obstruction module
\[M := H^{-k-1}(\mathrm{fib}(A^{\geq -k} \to B))\]
is a rational $G$-module,
finitely generated over $H^0(B)$ since $B$ is of finite presentation.
Evaluation at $s_0$ is a $G$-equivariant surjection of $M$ onto its fiber at
$s_0$, a finite-dimensional $G$-representation, and Lemma
\ref{lem:reynolds}(1) provides a finite-dimensional $G$-stable subspace
$W \subseteq M$ mapping isomorphically onto this fiber. 

After localizing at
$s_0$, $W$ generates $M$ by Nakayama, and this is the minimal choice of
generators in this degree. Adjoin $W$ in degree $-k-1$ and extend the
differential by an equivariant lift, using Lemma \ref{lem:reynolds}(1), of the
map classifying the obstruction.

Minimality at $s_0$ is the vanishing of the induced differential on generators
at $s_0$. This is preserved by the averaging, since $s_0$ is $G$-fixed and
$R_\chi$ acts on values at $s_0$.
\end{proof}

Finally, the slices to which all of this will be applied come from the \'etale
slice theorem.

\begin{theorem}[\'Etale slice theorem \cite{AHR}]\label{thm:AHR}
Let $X$ be a quasi-separated algebraic stack, locally of finite type over $\K$,
with affine stabilizers, and let $x \in X(\K)$ be a point whose stabilizer $G$
is linearly reductive. Then there exist an affine $\K$-scheme $\Spec(A)$ with a
$G$-action fixing a point $w$ and a pointed \'etale morphism
\[ ([\Spec(A)/G],\, w) \longrightarrow (X, x) \]
inducing an isomorphism of stabilizers at $w$ \cite[Theorem 1.1]{AHR}.
\end{theorem}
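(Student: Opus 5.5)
Theorem \ref{thm:AHR} is quoted from \cite[Theorem 1.1]{AHR}, and in the paper it is used as a black box. If I had to prove it, I would follow the deformation-theoretic strategy of Alper--Hall--Rydh. The idea is to build the slice formally around the residual gerbe and then algebraize it.

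Write $\mathcal{G}_x \simeq BG \hookrightarrow X$ for the residual gerbe at $x$. Since $\K$ is algebraically closed and $x$ is a $\K$-point, this is a closed immersion locally near $x$. Let $X_n$ denote its $n$-th infinitesimal neighbourhood, so $X_0 = BG$ and each $X_{n} \hookrightarrow X_{n+1}$ is a square-zero thickening.

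\emph{Step 1: lift the thickenings.} I would show inductively that each $X_n$ is a quotient $[\Spec(A_n)/G]$ with $A_0 = \K$, compatibly in $n$. The obstruction to deforming the affine $G$-scheme $\Spec(A_n)$, together with the map to $X_{n+1}$, lies in a degree-$2$ cohomology group of a quasi-coherent sheaf on $[\Spec(A_n)/G]$. Choices of lifts form a torsor under a degree-$1$ group. Both groups vanish because $G$ is linearly reductive, so taking invariants is exact and $[\Spec(A_n)/G]$ is cohomologically affine. At first order, the $G$-module $\mathfrak{m}/\mathfrak{m}^2$ is obtained with an equivariant splitting from Lemma \ref{lem:reynolds}(1).

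\emph{Step 2: pass to the limit.} Set $\widehat{A} := \varprojlim A_n$. I would show that $[\Spec(\widehat{A})/G]$ is coherently complete along $BG$, meaning that its coherent sheaves are limits of their restrictions to the thickenings. This uses linear reductivity again: coherent sheaves split into finitely many isotypic pieces, each a finitely generated module over the complete Noetherian ring $\widehat{A}^G$. Tannaka duality for algebraic stacks with affine stabilizers (Hall--Rydh) then promotes the compatible system $[\Spec(A_n)/G] \to X$ to a morphism $[\Spec(\widehat{A})/G] \to X$. This morphism is formally versal at $w$.

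\emph{Step 3: algebraize and shrink.} An equivariant form of Artin approximation, applied to the locally finitely presented functor of $G$-equivariant pointed maps to $X$, produces a finite type affine $G$-scheme $\Spec(A)$ with a fixed point $w$. It comes with a morphism $f \colon [\Spec(A)/G] \to X$ agreeing with the formal one to second order. Hence $f$ is formally étale at $w$ and induces an isomorphism on stabilizers at $w$. Openness of the étale locus gives étaleness on an open neighbourhood of $w$. Using the good moduli space $\Spec(A^G)$, I would shrink this neighbourhood to a $G$-saturated affine one, which keeps the quotient presentation.

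The main obstacle is Step 2, together with the algebraization in Step 3. Coherent completeness and the Tannakian extension are the non-formal inputs, and the equivariant Artin approximation must be arranged so that the stabilizer at $w$ remains all of $G$. I would attempt the latter first by approximating inside the stack of $G$-torsors over $X$.
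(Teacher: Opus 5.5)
The paper gives no proof of this statement. It imports \cite[Theorem 1.1]{AHR}, and your outline is the Alper--Hall--Rydh strategy: thickenings of the residual gerbe, coherent completeness with Hall--Rydh Tannaka duality, equivariant Artin algebraization, and shrinking by an invariant function. Step~1 is sound as you describe it.

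Step~2, however, fails as written in two places.

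\emph{The limit.} The ring-theoretic limit $\varprojlim A_n$ carries no rational $G$-action, so $[\Spec(\widehat{A})/G]$ is not defined. Take $X = [\mathbb{A}^2/\Gm]$ with weights $\pm 1$ as in Example~\ref{ex:equiv_dloc}. Then $A_n = \K[u,v]/(u,v)^{n+1}$, and $\varprojlim A_n = \K[[u,v]]$ contains $\sum_n u^n$, which is not a finite sum of weight vectors. The limit must be taken in rational $G$-modules, $\widehat{A} := \bigoplus_\rho \varprojlim_n (A_n)_\rho$; here this is $\K[[uv]] \otimes_{\K[uv]} \K[u,v]$. You then have to prove the following:
\begin{enumerate}[(i)]
\item $\widehat{A}^G = \varprojlim A_n^G$ is noetherian;
\item each isotypic component is finite over $\widehat{A}^G$;
\item $\widehat{A}$ is noetherian, which Tannaka duality needs;
\item $\widehat{A}/J^{n+1} \cong A_n$ for $J = \ker(\widehat{A} \to A_0)$.
\end{enumerate}

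\emph{Coherent completeness.} Coherent sheaves do not split into finitely many isotypic pieces: $\mathcal{O}$ already contains every weight in the example above. Moreover, finiteness of the pieces over a complete ring does not by itself give coherent completeness. The thickenings are cut out by the $J^n$, whereas completeness of $\widehat{A}^G$ controls the $\mathfrak{m}^k \widehat{A}$, with $\mathfrak{m} \subset \widehat{A}^G$ maximal ($\mathfrak{m} = (uv)$ above). These filtrations are not cofinal, since $u^N \in J^N$ but $u^N \notin (uv)\widehat{A}$.

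The repair is to compare the two filtrations only after applying the exact functor $(-)^G$ to coherent sheaves, using that $BG$ is the unique closed point. Essential surjectivity is then proved by lifting equivariant presentations $\mathcal{O} \otimes V \to F_n$ through the tower. This is the coherent completeness theorem of \cite{AHR}.

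In Step~3, agreement with the formal map to second order yields \'etaleness at $w$ only once you also know that the completion of $[\Spec(A)/G]$ at $w$ is $[\Spec(\widehat{A})/G]$. Otherwise the inclusion $\Spec \K[x]/(x^3) \to \mathbb{A}^1$ is a counterexample: it agrees with the identity of $\Spec \K[[x]]$ to second order but is not \'etale. Securing this is what Artin's algebraization does, as opposed to approximation alone.

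Your concern about the stabilizer, by contrast, is empty. A $G$-fixed point $w$ has stabilizer $G$ in $[\Spec(A)/G]$, and $G \to G_x$ is the given isomorphism because $f$ agrees with the formal map on $BG$.
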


In the symplectic setting, equivariant refinements of the Darboux theorems of
\cite{Brav, BenBassat2015} are established by Descombes for torus actions preserving the
symplectic form \cite{Descombes}, by Park in the relative setting
\cite{ParkPush}, and by Kinjo--Park--Safronov for stacks with reductive
stabilizers \cite[Proposition 3.13]{KPS}. Graded versions, with the potential
homogeneous of weight one, arise in dimensional reduction \cite{Kinjo}.
Proposition \ref{prop:equiv_darboux} and Theorem \ref{thm:contact_darboux} are
the contact counterparts.

\section{The Equivariant Stacky Contact Darboux Theorem}
\label{sec:darboux}

This section proves the four theorems of the introduction using the tools of
\S\ref{subsec:contact} and \S\ref{subsec:slices}. We define the local model
$\Delta\mathrm{loc}_G(s)$ and establish its two essential properties, which are
Theorems \ref{thm:B} and \ref{thm:C}: its symplectification is the derived
critical locus of $t \cdot s$ on a quotient stack, and its degree $-2$
cohomology is $\mathrm{Lie}\,\ker(\chi)$, which is what forces the generators
$w_\xi$ (\S\ref{subsec:eqdisc}). The affine case of Theorem \ref{thm:A} is
proved in \S\ref{subsec:affine} and globalized by the slice theorem in
\S\ref{subsec:global}; Theorem \ref{thm:D} occupies \S\ref{subsec:quivers}.

\subsection{The Equivariant Discriminant Locus}
\label{subsec:eqdisc}

Over a quotient stack the derived discriminant locus of Definition
\ref{def:discriminant} acquires Koszul--Tate generators in degree $-2$, dual to
the copy of $\mathfrak{g}^\vee$ that the quotient contributes to the cotangent
complex in degree $1$. We give the resulting local model, and in Proposition
\ref{prop:degree_two} the cohomological invariant that forces it.

\begin{definition}[Contact Darboux charts, the equivariant discriminant
locus]\label{def:equiv_discriminant}\mbox{}\medskip
\begin{enumerate}[(i)]
\item A \bfem{contact Darboux chart} is $\Spec(A)$ for $A$ the Koszul--Tate algebra of
Lemma \ref{lem:KT_model} for some $s \in \cO(V)$, extended by finitely many
generators in degree $-2$, together with the form $\alpha = d_{\DR}z + \sum_i
y_i\, d_{\DR}x_i$. A chart with no generators in degree $-2$ is
$\Delta\mathrm{loc}(s)$.

\item Let $G$ be linearly reductive with Lie algebra $\mathfrak{g}$, let $\chi
\colon G \to \Gm$ be a character, let $V$ be a finite-dimensional
$G$-representation with coordinates $x_1, \dots, x_n$, and let $s \in \cO(V)$ be
$\chi^{-1}$-semi-invariant, that is, $s(g \cdot x) = \chi(g)^{-1} s(x)$. Write
$a_\xi = \sum_i a_\xi(x_i)\, \partial/\partial x_i$ for the vector field
generating the action of $\xi \in \mathfrak{g}$, where
\[ a_\xi(h)(x) := \frac{d}{d\epsilon}\Big|_{\epsilon = 0}
h\bigl(\exp(\epsilon \xi) \cdot x\bigr), \qquad h \in \cO(V), \]
so that $a_\xi(s) = -\langle d\chi, \xi \rangle s$. The \bfem{equivariant
discriminant locus} $\Delta\mathrm{loc}_G(s) := \Spec(A_G)$ of the datum
$(G, \chi, V, s)$ is the contact Darboux chart whose degree $-2$ generators are
$w_\xi$, linear in $\xi \in \mathfrak{g}$, with differential the \bfem{contact
moment map}
\[ \langle \mu, \xi \rangle := d(w_\xi)
= \sum_{i=1}^n a_\xi(x_i)\, y_i + \langle d\chi, \xi \rangle\, z , \]
the $G$-action on the generators being the one for which $\alpha$ is
$\chi^{-1}$-semi-invariant. Here and below, the $G$-module spanned by a set of
generators is labelled by the pullback action: a generator $u$ with
$g^* u = \chi(g)^{-1} u$ is said to span $\chi^{-1}$, matching the convention
$s(g \cdot x) = \chi(g)^{-1} s(x)$ for semi-invariants. Under the left-module
convention $g \cdot u := (g^{-1})^* u$ every character twist below is inverted.
\end{enumerate}
\end{definition}

\begin{lemma}[Equivariant Koszul--Tate model]\label{lem:equiv_KT_model}
With the notation of Definition \ref{def:equiv_discriminant}:
\begin{enumerate}
\item $d^2 = 0$, and $G$ acts on the generators of $A_G$ through the modules
$V^\vee$ in degree $0$, $V \otimes \chi^{-1}$ (the $y_i$) and $\chi^{-1}$ (the
generator $z$) in degree $-1$, and $\mathfrak{g} \otimes \chi^{-1}$ (the $w_\xi$)
in degree $-2$, where $\mathfrak{g}$ carries the adjoint action.
\item $H^0(A_G) = \cO(\{s = 0\} \cap \mathrm{Crit}(s))$, so
$\Delta\mathrm{loc}_G(s)$ and $\Delta\mathrm{loc}(s)$ have the same classical
truncation, and $\Delta\mathrm{loc}_G(s) = \Delta\mathrm{loc}(s)$ if and only if
$\dim G = 0$.
\end{enumerate}
\end{lemma}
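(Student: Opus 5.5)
We verify the two assertions in order, by direct computation on generators.

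For (1), the only new differential is on the $w_\xi$. Since $d$ is a derivation and $d(x_i)=0$,
\[ d^2(w_\xi)=\sum_i a_\xi(x_i)\,\frac{\partial s}{\partial x_i}+\langle d\chi,\xi\rangle\, s . \]
The first sum is $a_\xi(s)$, because $a_\xi$ is the derivation $\sum_i a_\xi(x_i)\,\partial/\partial x_i$. The identity $a_\xi(s)=-\langle d\chi,\xi\rangle s$ is obtained by differentiating $s(\exp(\epsilon\xi)\cdot x)=\chi(\exp(\epsilon\xi))^{-1}s(x)$ at $\epsilon=0$. The two terms therefore cancel, so $d^2(w_\xi)=0$. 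On the generators of degree $0$ and $-1$, $d^2=0$ already holds by Lemma \ref{lem:KT_model}.

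For the $G$-modules, we use the pullback convention. In degree $0$, the linear coordinates span $V^\vee$. The requirement that $\alpha=d_{\DR}z+\sum_i y_i\,d_{\DR}x_i$ be $\chi^{-1}$-semi-invariant forces $z$ to span $\chi^{-1}$. It also forces $\sum_i y_i\,d_{\DR}x_i$ to be $\chi^{-1}$-semi-invariant, so the $y_i$ transform dually to the $x_i$, twisted by $\chi^{-1}$; that is, they span $V\otimes\chi^{-1}$. We check that these choices are consistent with the differential: $d(z)=s$ spans $\chi^{-1}$ by hypothesis, and $ds$ is $\chi^{-1}$-semi-invariant, so $d(y_i)=\partial s/\partial x_i$ transforms compatibly.

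For the $w_\xi$, note that $\xi\mapsto\sum_i a_\xi(x_i)\,y_i$ is the pairing of the vector fields $a_\xi$ with the $\chi^{-1}$-twisted tautological form $\sum_i y_i\,d_{\DR}x_i$. Since $\xi\mapsto a_\xi$ is $G$-equivariant for the adjoint action, this map is equivariant $\mathfrak{g}\to A^{-1}\otimes\chi^{-1}$. The term $\langle d\chi,\xi\rangle z$ is also equivariant, since $d\chi$ is $\mathrm{Ad}$-invariant and $z$ spans $\chi^{-1}$. Hence $\mu$ is equivariant when the $w_\xi$ span $\mathfrak{g}\otimes\chi^{-1}$, which makes $d$ equivariant on $A_G$. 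The one point needing care is the bookkeeping of twists under the pullback convention; we would check it once on $GL_1$ acting on $V=\K$.

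For (2), the generators $w_\xi$ sit in degree $-2$, so they do not affect $H^0$. Thus $H^0(A_G)=H^0(A)=\cO(V)/(s,\partial s/\partial x_i)$, which is the classical truncation computed in Lemma \ref{lem:KT_model}. If $\dim G=0$, then $\mathfrak{g}=0$, no $w_\xi$ are adjoined, and $A_G=A$.

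Conversely, suppose $\dim G>0$. Then $A_G$ has nonzero degree $-2$ generators, and we need to show this changes the derived scheme, not just the presentation. We compare cotangent complexes at a point of the classical truncation. $\LL_{A_G}$ is the complex of generators, and it has a nonzero term in degree $-2$, whereas $\LL_A$ is concentrated in degrees $[-1,0]$. The map $\mathfrak{g}\to A^{-1}$ given by $\mu$ can fail to be injective, and then $h^{-2}(\LL)\neq 0$ for $A_G$ but not for $A$.

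This injectivity issue is the main obstacle. It is cleanest at the origin, where $\mu$ vanishes identically if $\chi$ is trivial. In that case the fiber $h^{-2}(\LL_{A_G}|_0)$ is all of $\mathfrak{g}\neq 0$, while $\LL_A$ is concentrated in degrees $[-1,0]$, so $A_G\not\simeq A$. In general we would read the claim as a statement that the presentations differ exactly when $\mathfrak{g}\ne0$, and appeal to the fiberwise computation of Proposition \ref{prop:degree_two} to see the distinction intrinsically.
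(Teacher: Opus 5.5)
Your proof is correct and follows the paper's own argument: the same computation $d^2(w_\xi)=a_\xi(s)+\langle d\chi,\xi\rangle s=0$, and the same weight bookkeeping from the $\chi^{-1}$-semi-invariance of $\alpha$ and the $\mathrm{Ad}$-invariance of $d\chi$. For (2) you also make the same observation that the $w_\xi$ do not affect $H^0$, and you end up reading the final equivalence as the paper does, at the level of presentations ($\mathfrak{g}=0$ if and only if $\dim G=0$ in characteristic zero).

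Your intrinsic detour is not needed, and injectivity of the linearized $\mu$ is not the real issue. At any point $p$ of the common truncation, the Euler characteristics of $\LL_{A}|_p$ and $\LL_{A_G}|_p$ are $-1$ and $\dim\mathfrak{g}-1$, which already distinguishes the two derived schemes whenever $\mathfrak{g}\neq 0$ and the truncation is nonempty. Proposition \ref{prop:degree_two}(2) alone would not do this, since $h^{-2}$ can vanish when $d\chi\neq 0$.
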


\begin{proof}~
\begin{enumerate}
\item Only $d^2(w_\xi)$ requires checking:
\[ d^2(w_\xi) = \sum_{i=1}^n a_\xi(x_i)\, \frac{\partial s}{\partial x_i}
+ \langle d\chi, \xi \rangle\, s = a_\xi(s) + \langle d\chi, \xi \rangle s = 0 \]
by the displayed consequence of semi-invariance. For the weights, the degree $0$
generators span $V^\vee$ by construction, and the relation $g^* \alpha =
\chi(g)^{-1}\alpha$ holds if and only if the $y_i$ span $V \otimes \chi^{-1}$ and
$z$ spans $\chi^{-1}$. For these weights, $d(z) = s$ and $d(y_i) = \partial
s/\partial x_i$ are equivariant precisely because $s$ is $\chi^{-1}$-semi-invariant.

In degree $-2$, the first summand of $d(w_\xi)$ is the
contraction of the vector field $a_\xi$ with $\alpha - d_{\DR}z$, hence
transforms by $\chi^{-1}$ and by the adjoint action in $\xi$, while $d\chi$ is
$\mathrm{Ad}$-invariant, so the second summand transforms the same way. 

Thus
$g^* w_\xi = \chi(g)^{-1} w_{\mathrm{Ad}_{g^{-1}} \xi}$, which is the module
$\mathfrak{g} \otimes \chi^{-1}$.

\item The generators $w_\xi$ lie in degree $-2$ and therefore do not contribute
to $H^0$, which is $\cO(V)$ modulo the image of $d$ on degree $-1$ elements, $\cO(V)/(\partial s/\partial x_1, \dots, \partial s/\partial x_n, s)$.
Since
$\mathrm{char}\, \K = 0$, we have $\dim \mathfrak{g} = \dim G$, so the two cdgas
agree exactly when $\dim G = 0$.
\end{enumerate}
\vspace{-1.5em}
\end{proof}

Inverting $t$ symplectifies the chart, and the result is a derived critical
locus in the standard form of \cite{Brav, KPS}. This is what ties the model to
the symplectic theory.

\begin{proposition}[Symplectification of the equivariant chart]
\label{prop:equiv_sympl}
With the notation of Definition \ref{def:equiv_discriminant}:
\begin{enumerate}
\item Set $\widetilde{A}_G := A_G[t, t^{-1}]$, with $\Gm$ scaling $t$ with weight
$1$ and $G$ acting on $t$ through $\chi$, and
\[ \widetilde{y}_i := t y_i, \qquad \zeta := z, \qquad
\widetilde{w}_\xi := t w_\xi . \]
Then $\widetilde{A}_G$ is the standard-form Koszul--Tate cdga presenting
$\dCrit([\,\widetilde{f}\,/\,G\,])$, the derived critical locus of the
$G$-invariant potential $\widetilde{f}(x,t) = t \cdot s(x)$ on the quotient stack
$[(V \times \Gm)/G]$, and $A_G = \widetilde{A}_G^{\Gm}$.
\item The Liouville form $\lambda := t\alpha$ has $\Gm$-weight $1$ and satisfies
$d_{\DR}\lambda = \omega$, the standard $-1$-shifted symplectic form of
$\dCrit([\,\widetilde{f}\,/\,G\,])$. Consequently $[\Delta\mathrm{loc}_G(s)/G]$
carries a $-1$-shifted contact structure with contact line $\cO \otimes \chi$
and contact form $\alpha$, whose symplectification is
$\dCrit([\,\widetilde{f}\,/\,G\,])$. The four spaces form a Cartesian square
\[
\begin{tikzcd}[row sep=large, column sep=large]
\Spec(\widetilde{A}_G) \arrow[r] \arrow[d, "\Gm"']
\arrow[dr, phantom, "\lrcorner", very near start] &
\dCrit([\,\widetilde{f}\,/\,G\,]) \arrow[d, "\Gm"] \\
\Delta\mathrm{loc}_G(s) \arrow[r] & {[\Delta\mathrm{loc}_G(s)/G]}
\end{tikzcd}
\]
in which the horizontal maps are the $G$-torsors presenting the two quotient
stacks and the vertical maps are the $\Gm$-torsors of the symplectification.
\end{enumerate}
\end{proposition}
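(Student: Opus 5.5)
The plan extends the generating-function computation in the proof of Lemma \ref{lem:KT_model} by the degree $-2$ generators, keeping track of the $G$-action throughout.

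\textbf{Step 1: the differentials in $\widetilde{A}_G$.} Over $\cO(V)[t,t^{-1}]$ I would compute the differentials of the new generators and compare them with the standard form of $\dCrit([\widetilde f/G])$ from \cite{KPS}. The $x$-derivative is
\[ d(\widetilde{y}_i) = t\,\partial s/\partial x_i = \partial \widetilde f/\partial x_i , \]
and the $t$-derivative is $d(\zeta) = s = \partial\widetilde f/\partial t$. The standard form has a degree $-2$ generator $\widetilde w_\xi$ for each $\xi\in\mathfrak g$, with differential the $-1$-shifted moment map $\sum_j a^{V\times\Gm}_\xi(q_j)\,p_j$. Here the $q_j$ are the base coordinates and the $p_j$ are their dual degree $-1$ generators. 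The vector field of $\xi$ on $V\times\Gm$ is $a_\xi + \langle d\chi,\xi\rangle\, t\,\partial_t$, because $G$ acts on $t$ through $\chi$. The dual generators are $\widetilde y_i$ for $x_i$ and $\zeta$ for $t$. So the moment map equals
\[ \sum_i a_\xi(x_i)\,t y_i + \langle d\chi,\xi\rangle\, t z = t\,d(w_\xi) = d(\widetilde w_\xi), \]
which is the required identity. Invariance of $\widetilde f$ follows from the weights: $s$ spans $\chi^{-1}$ and $t$ spans $\chi$.

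\textbf{Step 2: the $\Gm$-invariants and the symplectic form.} The statement $A_G=\widetilde A_G^{\Gm}$ holds because the generators $x_i, y_i, z, w_\xi$ have weight $0$ and $t$ has weight $1$. The weight-zero part of $A_G[t,t^{-1}]$ is therefore $A_G$, and the differential preserves weight. For (2), the Liouville form is
\[ \lambda = t\alpha = t\,d_{\DR}z+\sum_i \widetilde y_i\,d_{\DR}x_i , \]
so $d_{\DR}\lambda = d_{\DR}t\,d_{\DR}\zeta+\sum_i d_{\DR}\widetilde y_i\,d_{\DR}x_i$. This is the form from Lemma \ref{lem:KT_model}, and it is the standard form on the quotient, since the degree $-2$ generators do not enter $\omega$ in the \cite{KPS} normal form.

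The weights follow from Lemma \ref{lem:equiv_KT_model}(1). $\lambda$ is $G$-invariant because $\alpha$ spans $\chi^{-1}$ and $t$ spans $\chi$, and it has $\Gm$-weight $1$. Closedness and non-degeneracy of $\omega$ on the quotient stack come from the cited standard form. Non-degeneracy of $\alpha$ then follows from the symplectification criterion of Theorem \ref{thm:symplectification}: a principal $\Gm$-bundle carrying a weight-one exact symplectic form descends to a contact structure, by \cite[Theorem 1.1]{IzbudakBerktav2026}. The contact line is the line bundle associated to $t$, namely $\cO\otimes\chi$, and the form is $\iota_{t\partial_t}\omega/t=\alpha$, exactly as at the end of the proof of Lemma \ref{lem:KT_model}.

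\textbf{Step 3: the Cartesian square.} Both horizontal maps are quotients by the same $G$-action, and $\Spec(\widetilde A_G)\to\Delta\mathrm{loc}_G(s)$ is the trivial $\Gm$-torsor $-\times\Gm$. Taking $G$-quotients of a $G$-equivariant $\Gm$-torsor gives a Cartesian square, since torsors pull back along torsors.

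\textbf{Main obstacle.} I expect the hardest step to be confirming that the \cite{KPS} standard form of $\dCrit$ on a quotient stack has degree $-2$ differential exactly the contraction of the action vector field with the degree $-1$ generators, with no correction term. The sign convention $\tau=-t$ from Lemma \ref{lem:KT_model} must also be matched. I would first check the case $G=\Gm$ acting on $V=\mathbb A^1$ by hand, then use equivariance and linearity in $\xi$ to reduce the general case to that computation.
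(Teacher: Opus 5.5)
Your proposal is correct and follows the paper's proof essentially step for step. It uses the same computation of $d(\widetilde y_i)$, $d(\zeta)$ and $d(\widetilde w_\xi)$ via the vector field $a_\xi + \langle d\chi,\xi\rangle\, t\,\partial_t$ on $V\times\Gm$, matched against the cited standard form of $\dCrit([\widetilde f/G])$; the same weight count giving $A_G=\widetilde A_G^{\Gm}$; the identity $d_{\DR}(t\alpha)=\omega$; and the same descent by the weight-one quotient theorem with $\iota_{t\partial_t}\omega = t\alpha$.

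You additionally spell out the torsor argument for the Cartesian square, which the paper leaves implicit. Note only that your rank-one check with $\Gm$ acting on $\mathbb{A}^1$ can confirm signs but cannot establish the general degree $-2$ formula. In both arguments that formula comes from the citation to \cite{KPS}.
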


\begin{proof}~
\begin{enumerate}
\item In degree $0$ one has $\cO(V)[t, t^{-1}] = \cO(V \times \Gm)$. In degree
$-1$,
\[ d(\widetilde{y}_i) = t\, \frac{\partial s}{\partial x_i}
= \frac{\partial \widetilde{f}}{\partial x_i}, \qquad
d(\zeta) = s = \frac{\partial \widetilde{f}}{\partial t}, \]
so $\widetilde{y}_i$ and $\zeta$ are the Koszul--Tate generators dual to
$\Omega^1_{V \times \Gm}$ \cite[Theorem 5.18]{Brav}. In degree $-2$,
\[ d(\widetilde{w}_\xi) = t\, d(w_\xi)
= \sum_{i=1}^n a_\xi(x_i)\, \widetilde{y}_i
+ a_\xi(t)\, \zeta
= \sum_{i=1}^n a_\xi(x_i)\, \widetilde{y}_i
+ \langle d\chi, \xi \rangle\, t \zeta, \]
since $G$ acts on $t$ through $\chi$ and hence $a_\xi(t) = \langle d\chi, \xi
\rangle t$. This is the shifted moment map of the $G$-action: under the
identification of the degree $-1$ generators of $\widetilde{A}_G$ with
$T_{V \times \Gm}$, it is the image of the vector field $a_\xi$, and the
$\widetilde{w}_\xi$ are the generators dual to the copy of $\mathfrak{g}^\vee$
sitting in degree $1$ of $\LL_{[(V \times \Gm)/G]}$. It is a cycle by Lemma
\ref{lem:equiv_KT_model}(1), since $\widetilde{f}$ is $G$-invariant, $t$ and $s$
carrying the characters $\chi$ and $\chi^{-1}$. This is the standard form of the
derived critical locus of an invariant function on a quotient stack
\cite[Proposition 3.13]{KPS}, \cite[Theorem 2.10]{BenBassat2015}. Finally the generators
$x_i, y_i, z, w_\xi$ all have $\Gm$-weight $0$ and $t$ has weight $1$, so
$\widetilde{A}_G^{\Gm} = A_G$.

\item The standard $-1$-shifted symplectic form of
$\dCrit([\,\widetilde{f}\,/\,G\,])$ is, on the atlas,
\[ \omega = d_{\DR}t \wedge d_{\DR}\zeta
+ \sum_{i=1}^n d_{\DR}\widetilde{y}_i \wedge d_{\DR}x_i \]
by \cite[Theorem 5.18]{Brav}. Substituting $\zeta = z$ and $\widetilde{y}_i = t y_i$
gives
\[ \omega = d_{\DR}t \wedge d_{\DR}z
+ \sum_{i=1}^n d_{\DR}(t y_i) \wedge d_{\DR}x_i, \]
and the Leibniz rule gives
\[ \omega = d_{\DR}t \wedge d_{\DR}z
+ \sum_{i=1}^n (d_{\DR}t \cdot y_i + t \cdot d_{\DR}y_i) \wedge d_{\DR}x_i. \]
Rearranging,
\begin{align*}
\omega &= d_{\DR}t \wedge \biggl(d_{\DR}z + \sum_{i=1}^n y_i\, d_{\DR}x_i\biggr)
+ t \sum_{i=1}^n d_{\DR}y_i \wedge d_{\DR}x_i \\
&= d_{\DR}\biggl(t \biggl(d_{\DR}z + \sum_{i=1}^n y_i\, d_{\DR}x_i\biggr)\biggr)
= d_{\DR}(t\alpha),
\end{align*}
so the extracted weight $0$ form $\alpha$ is the standard Darboux contact
$1$-form in local coordinates \cite{kib1, kib2}. By (1) the $\Gm$-action on
$\dCrit([\,\widetilde{f}\,/\,G\,])$ is free with quotient
$[\Delta\mathrm{loc}_G(s)/G]$, and the associated line bundle is $\cO \otimes
\chi$ because $G$ acts on $t$ through $\chi$. By the quotient theorem for
weight-one $\Gm$-actions \cite[Theorem 1.1]{IzbudakBerktav2026}, this datum
descends to a $-1$-shifted contact structure with contact form $\alpha$: the
descended form is the contraction of $\omega$ with the fundamental vector field
of the $\Gm$-action. Here,
$\iota_{t\,\partial/\partial t}\, d_{\DR}(t\alpha) = t\alpha$, whose weight-$0$
reduction under the trivialization by $t$ is $\alpha$. The resulting torsor,
with $\lambda = t\alpha$ and $\omega = d_{\DR}\lambda$, is the symplectification
in the sense of \cite[Definition 4.3]{kib1}, \cite[Theorem 3.9]{kib2}.
\end{enumerate}
\end{proof}

Two independent group actions are thus in play, doing different work: the
vertical $\Gm$ direction produces the symplectic structure, and the horizontal
$G$ direction produces the degree $-2$ generators.

\begin{remark}[Generators, degrees and weights]\label{rem:generator_table}
The bookkeeping of Lemma \ref{lem:equiv_KT_model} and Proposition
\ref{prop:equiv_sympl} is
\[
\begin{array}{lcccl}
 & \deg & G\text{-module} & \Gm\text{-weight} & d \\[2pt] \hline
\rule{0pt}{2.4ex}
x_i & 0 & V^\vee & 0 & \\
y_i & -1 & V \otimes \chi^{-1} & 0 & \partial s / \partial x_i \\
z & -1 & \chi^{-1} & 0 & s \\
w_\xi & -2 & \mathfrak{g} \otimes \chi^{-1} & 0 &
\textstyle\sum_i a_\xi(x_i)\, y_i + \langle d\chi, \xi \rangle\, z \\[2pt] \hline
\rule{0pt}{2.4ex}
t & 0 & \chi & 1 & \\
\widetilde{y}_i = t y_i & -1 & V & 1 &
\partial \widetilde{f} / \partial x_i \\
\zeta = z & -1 & \chi^{-1} & 0 &
\partial \widetilde{f} / \partial t \\
\widetilde{w}_\xi = t w_\xi & -2 & \mathfrak{g} & 1 &
\textstyle\sum_i a_\xi(x_i)\, \widetilde{y}_i
+ \langle d\chi, \xi \rangle\, t \zeta
\end{array}
\]
The upper block generates $A_G$; the two blocks together generate
$\widetilde{A}_G$, after inverting $t$. Multiplication by $t$ carries the rows
$y_i$ and $w_\xi$ of the upper block to $\widetilde{y}_i$ and
$\widetilde{w}_\xi$, raising the $\Gm$-weight by $1$ and untwisting the
character, and the weight-$0$ generators are exactly those of the upper block,
which is the statement $A_G = \widetilde{A}_G^{\Gm}$ read on generators. The
generator $\zeta$ is the one degree $-1$ generator of the symplectic model that
is not of the form $t \cdot (\,\cdot\,)$, and it is the contact direction.
\end{remark}

The generators $w_\xi$ cannot be omitted: the next proposition computes the
cohomological invariant that forces them, and checks that the model has it.

\begin{proposition}[The degree $-2$ obstruction]\label{prop:degree_two}
Let $Z$ be a $-1$-shifted contact derived Artin stack and $z \in Z(\K)$ a point
whose stabilizer acts on $\cL_z$ through $\chi$.
\begin{enumerate}
\item We have $h^{-2}(\LL_Z|_z) \simeq \mathrm{Lie}\,\ker(\chi)$. Consequently neither
an \'etale morphism \[([\Delta\mathrm{loc}(s)/G], 0) \to (Z, z)\] inducing an
isomorphism of stabilizers nor a smooth atlas of $Z$ by contact Darboux schemes
$\Delta\mathrm{loc}(s)$ exists near a point with $\dim \ker(\chi) > 0$. Both
exist when $\dim G = 0$.
\item Suppose $s(0) = 0$ and $ds(0) = 0$, so that the origin lies in the
classical truncation. Then the model has the correct invariant:
$h^{-2}(\LL_{[\Delta\mathrm{loc}_G(s)/G]}|_0) \simeq \ker(d\chi)$, and the
presentation of Definition \ref{def:equiv_discriminant} is minimal at the origin
in degree $-2$ if and only if $d\chi = 0$. Otherwise, exactly one generator
$w_\xi$ cancels against $z$.
\end{enumerate}
\end{proposition}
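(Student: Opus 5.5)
The plan for (1) is to compute $h^{-2}(\LL_Z|_z)$ upstairs on the symplectification $p\colon \widetilde{Z}\to Z$ of Theorem \ref{thm:symplectification} and then descend it. I would write the argument in four steps: the stabilizer upstairs, the cohomology upstairs, descent along $p$, and the two non-existence statements.

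\emph{Stabilizer upstairs.} By the functoriality recalled in \S\ref{subsec:contact}, $G=\mathrm{Aut}(z)$ acts on $p^{-1}(z)\simeq \cL_z^\times\simeq\Gm$ through $\chi$. So a point $\widetilde{z}$ over $z$ has stabilizer the isotropy group of this action, namely $\ker(\chi)$.

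\emph{Cohomology upstairs.} $\widetilde{Z}$ is $-1$-shifted symplectic, so $\Theta_\omega$ gives $\LL_{\widetilde Z}\simeq \TT_{\widetilde Z}[1]$. Hence $h^{-2}(\LL_{\widetilde Z}|_{\widetilde z})\simeq h^{-1}(\TT_{\widetilde Z}|_{\widetilde z})$. For an Artin stack with affine stabilizers the latter is $\mathrm{Lie}\,\mathrm{Aut}(\widetilde z)$, which equals $\mathrm{Lie}\,\ker(\chi)$ by the previous step. The same symplectic duality puts $\LL_{\widetilde Z}$ in amplitude $[-2,1]$, so $h^{-2}$ is the bottom cohomology. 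Bottom cohomology commutes with restriction to a point, which is what makes these pointwise manipulations legitimate.

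\emph{Descent along $p$.} Since $p$ is a $\Gm$-torsor, $\LL_{\widetilde Z/Z}\simeq\cO_{\widetilde Z}$ sits in degree $0$. The cofiber sequence $p^*\LL_Z\to\LL_{\widetilde Z}\to\LL_{\widetilde Z/Z}$ then gives $h^{-2}(\LL_Z|_z)\simeq h^{-2}(\LL_{\widetilde Z}|_{\widetilde z})\simeq \mathrm{Lie}\,\ker(\chi)$.

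\emph{Non-existence.} For the quotient model, the cotangent complex of $[\Delta\mathrm{loc}(s)/G]$ at $0$ is built from the generators $x_i$, $y_i$, $z$ together with $\mathfrak{g}^\vee$ in degree $1$. It therefore has amplitude $[-1,1]$, and its $h^{-2}$ vanishes. An étale map identifies cotangent complexes, so such a map can exist only if $\mathrm{Lie}\,\ker(\chi)=0$. For a smooth atlas $u\colon U=\Delta\mathrm{loc}(s)\to Z$, the relative cotangent complex $\LL_{U/Z}$ is concentrated in degree $0$. The cofiber sequence for $u$ then gives $h^{-2}(u^*\LL_Z)\simeq h^{-2}(\LL_U)=0$, since $\LL_U$ has amplitude $[-1,0]$. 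Again this forces $\dim\ker(\chi)=0$. When $\dim G=0$, the degree $-2$ generators disappear by Lemma \ref{lem:equiv_KT_model}(2), and existence follows from the slice/Darboux construction of Theorem \ref{thm:contact_darboux}, which I take as given.

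For (2), I would compute $\LL_{[\Delta\mathrm{loc}_G(s)/G]}|_0$ directly from the generators of Lemma \ref{lem:equiv_KT_model}. The complex is
\[ \mathfrak{g}\ \to\ V\oplus\K\ \to\ V^\vee\ \to\ \mathfrak{g}^\vee \]
in degrees $-2,\dots,1$, where the differentials are the linearizations at $0$ of $d$ and of the action.

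\emph{Linearized differential out of degree $-2$.} In $d(w_\xi)=\sum_i a_\xi(x_i)y_i+\langle d\chi,\xi\rangle z$, the action on $V$ is linear, so each $a_\xi(x_i)$ is a linear function vanishing at $0$. The first sum therefore contributes nothing to the linear term, and the map $\mathfrak{g}\to V\oplus\K$ is $\xi\mapsto\langle d\chi,\xi\rangle z$.

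\emph{The rest of the complex.} The hypothesis $ds(0)=0$ makes $z\mapsto ds(0)$ zero. Hence nothing from degree $-1$ interferes with the kernel in degree $-2$. It follows that $h^{-2}\simeq\ker(d\chi)$, which agrees with $\mathrm{Lie}\,\ker(\chi)$ in characteristic zero, as (1) predicts.

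\emph{Minimality.} Minimality at $0$ means the linearized differential on generators vanishes. Here this happens iff $d\chi=0$. If $d\chi\ne0$, then $d\chi$ has rank one, so exactly one $w_\xi$, with $\langle d\chi,\xi\rangle\neq 0$, cancels against $z$.

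The step I expect to be the main obstacle is the identification $h^{-1}(\TT_{\widetilde Z}|_{\widetilde z})\simeq\mathrm{Lie}\,\mathrm{Aut}(\widetilde z)$ for a derived stack, and in particular making sure the derived structure contributes nothing in degree $-1$ of $\TT$. I would argue this via the cofiber sequence $\LL_{\widetilde Z}|_{\widetilde z}\to\LL_{B\mathrm{Aut}}$ coming from the residual gerbe, using that $\widetilde z$ is a classical point.
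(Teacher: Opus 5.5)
Your proposal is correct and follows the paper's proof step for step: the lift $\widetilde z$ has stabilizer $\ker(\chi)$, symplectic duality on $\widetilde Z$ turns $\mathrm{Lie}\,\ker(\chi)$ into $h^{-2}$ (the paper routes this through $h^1(\LL_{\widetilde Z}|_{\widetilde z}) \simeq \mathfrak{k}^\vee$, which is the same fact), the cotangent sequence of the $\Gm$-torsor $p$ descends it to $Z$, the amplitudes $[-1,1]$ and $[-1,0]$ of the naive models rule them out, and part (2) is the same linearization of $d(w_\xi)$ at the fixed origin. You should drop the side remark that bottom cohomology commutes with restriction to a point: it is false in general (for $[\mathbb{A}^1/\Gm]$ with weight one, $h^0$ of the cotangent complex vanishes while $h^0$ of its fiber at the origin is $\K$), and your argument does not need it, since every step is computed directly on the derived fibers.
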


\begin{proof}
(1) Let $\widetilde{z} \in \widetilde{Z}(\K)$ be a lift of $z$. The stabilizer
of $\widetilde{z}$ is $\ker(\chi)$, since the stabilizer of $z$ acts on the
fiber $p^{-1}(z)$ through $\chi$. 
Write $\mathfrak{k} := \ker(d\chi) =
\mathrm{Lie}\,\ker(\chi)$, so that
\[h^1(\LL_{\widetilde{Z}}|_{\widetilde{z}}) \simeq \mathfrak{k}^\vee.\] 
As
$\widetilde{Z}$ is $-1$-shifted symplectic, $\LL_{\widetilde{Z}} \simeq
\TT_{\widetilde{Z}}[1]$ gives $h^i(\LL_{\widetilde{Z}}|_{\widetilde{z}}) \simeq
h^{-1-i}(\LL_{\widetilde{Z}}|_{\widetilde{z}})^\vee$, thus,
\[h^{-2}(\LL_{\widetilde{Z}}|_{\widetilde{z}}) \simeq \mathfrak{k}.\] The relative
cotangent sequence of the $\Gm$-torsor $p$ reads $p^* \LL_Z \to
\LL_{\widetilde{Z}} \to \cO$ with the third term in degree $0$, so
\[h^{-2}(\LL_Z|_z) \simeq \mathfrak{k}.\]
For the consequence, the cotangent
complex of $[\Delta\mathrm{loc}(s)/G]$ has amplitude $[-1,1]$, so its $h^{-2}$
vanishes, and a smooth morphism from a stack with vanishing $h^{-2}$ forces
\[h^{-2}(\LL_Z|_z) = 0\] on the image.

(2) At the origin the linear part of $d(w_\xi)$ is
$\langle d\chi, \xi \rangle z$, because the vector field $a_\xi$ vanishes at the
fixed point $0$. So, the linearized differential $\mathfrak{g} \to V \oplus \K$
in degrees $-2$ and $-1$ is $\xi \mapsto (0, \langle d\chi, \xi \rangle)$ and
$h^{-2} \simeq \ker(d\chi)$. Minimality at $0$ in degree $-2$ is the vanishing
of this linearized differential, which is the vanishing of $d\chi$. When
$d\chi \neq 0$, its image is the line spanned by $z$, so exactly one generator
cancels against $z$.
\end{proof}

With the model and its obstruction, it remains to show that every
$-1$-shifted contact structure on an affine quotient is of this form.

Two computations with $G = \Gm$ show the moment map first surviving and then
cancelling.

\begin{example}\label{ex:equiv_dloc}
Let $G = \Gm$, distinct from the structural $\Gm$, act on $V = \mathbb{A}^2$
with coordinates $u, v$ of weights $1$ and $-1$, let $\chi$ be trivial, and let $s = uv$, which is invariant and has
$s(0) = 0$, $ds(0) = 0$. Write $\xi$ for the generator of $\mathfrak{g} = \K$, so
that $a_\xi = u\, \partial/\partial u - v\, \partial/\partial v$ and $\langle
d\chi, \xi \rangle = 0$. Then $A_G$ has generators $u, v$ in degree $0$, the
generators $y_u, y_v, z$ in degree $-1$ with
\[ d(y_u) = v, \qquad d(y_v) = u, \qquad d(z) = uv, \]
and one generator $w$ in degree $-2$ with
\[ d(w) = u\, y_u - v\, y_v, \qquad
d^2(w) = uv - vu = 0 . \]
Here $\ker(\chi) = G$, so $\mathfrak{k} = \K$ and $d\chi = 0$: the presentation
is minimal in degree $-2$, $h^{-2} = \K$, and $w$ cannot be cancelled. This is the smallest
case in which $[\Delta\mathrm{loc}(s)/G]$ is not a local model, the classical
truncation of $[\Delta\mathrm{loc}_G(s)/G]$ being $B\Gm$. The symplectification
is $\dCrit([\,t\,uv\,/\,G\,])$ on $[(\mathbb{A}^2 \times \Gm)/G]$, where the
factor $\Gm$ is the structural one and $t$ has $G$-weight $0$.

If instead $G = \Gm$ acts on $V = \mathbb{A}^1$ with weight $-1$ and
$\chi(g) = g^2$,
then $s = x^2$ is $\chi^{-1}$-semi-invariant, $a_\xi = -x\, \partial/\partial x$
and $\langle d\chi, \xi \rangle = 2$, so $d(w) = -x y + 2z$. Now $d\chi \neq 0$,
$\ker(\chi) = \mu_2$ is finite, and $w$ cancels against $z$.
\end{example}

\subsection{The Affine Equivariant Contact Darboux Theorem}
\label{subsec:affine}

Everything is now in place for the central statement: on an affine slice, a
$-1$-shifted contact structure on the quotient is the standard one on an
equivariant discriminant locus. The global theorem of \S\ref{subsec:global}
reduces to it.

\begin{proposition}[Equivariant contact Darboux theorem for affine slices]
\label{prop:equiv_darboux}
Let $G$ be a linearly reductive group over $\K$ acting on an affine derived
$\K$-scheme $S$ of finite presentation, fixing a classical point $s_0 \in
S(\K)$. Suppose the quotient stack $[S/G]$ carries a $-1$-shifted contact
structure $(\cL, \alpha)$, and let $\chi$ be the character of the $G$-action on
$\cL_{s_0}$. Then there exist 
\begin{enumerate}[(i)]
\item a $G$-invariant affine open $S' \subseteq S$
containing $s_0$, 
\item a finite-dimensional $G$-representation $V$, and a
$\chi^{-1}$-semi-invariant function $s \in \cO(V)$ with $s(0) = 0$ and
$ds(0) = 0$, together with 
\item a $G$-equivariant equivalence
$S' \simeq \Delta\mathrm{loc}_G(s)$ carrying $s_0$ to $0$ and the contact
structure to the standard one of Definition \ref{def:equiv_discriminant},
\end{enumerate}

such that the presentation $A_G$ is generated over $\cO(V) = \mathrm{Sym}(V^\vee)$ by the
$G$-modules $V \otimes \chi^{-1}$ and $\chi^{-1}$ in degree $-1$, spanned by the
$y_i$ and by $z$, and $\mathfrak{g} \otimes \chi^{-1}$ in degree $-2$, spanned by
the $w_\xi$.
\end{proposition}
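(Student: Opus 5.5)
We reduce to the symplectic equivariant Darboux theorem of Kinjo--Park--Safronov \cite[Proposition 3.13]{KPS} and descend along the structural $\Gm$ using Proposition \ref{prop:equiv_sympl}.

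\emph{Step 1: trivialize the contact line.} Apply the semi-invariant trivialization of \S\ref{subsec:slices} to $\cL$ at $s_0$. This yields a $G$-invariant affine open $S' \ni s_0$ with $\cL|_{S'} \simeq \cO_{S'} \otimes \chi$, and with the $G$-equivariant principal $\Gm$-bundle $S' \times \Gm$, where $g\cdot(x,t) = (g\cdot x, \chi(g)t)$. Since symplectification commutes with \'etale (here open) base change, the symplectification of $[S'/G]$ is $[(S' \times \Gm)/G]$. It carries a $-1$-shifted symplectic form $\omega = d_{\DR}\lambda$ of $\Gm$-weight one, and $G$ acts on the point $(s_0,1)$ through its stabilizer $\ker(\chi)$.

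\emph{Step 2: equivariant Darboux upstairs.} Apply Proposition \ref{prop:equiv_standard_form} to $S'$ with its $G$-action to obtain a $G$-equivariant minimal standard form $A \simeq \cO(S')$. The degree $0$ generators realize $V^\vee := \mathfrak{m}_{s_0}/\mathfrak{m}_{s_0}^2$, so the classical part embeds in the representation $V$ with $s_0 \mapsto 0$.

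Next pass to $\widetilde{A} := A[t,t^{-1}]$ and run the KPS argument, now $G\times\Gm$-equivariantly, for the weight-one symplectic form. KPS produce a $G$-invariant function $\widetilde f$ on $V \times \Gm$ and identify $\widetilde A$ with the Koszul--Tate algebra of $\dCrit([\widetilde f/G])$. In that algebra the degree $-2$ generators are indexed by $\mathfrak{g}$, and their differential is the shifted moment map.

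Every step of that argument involves choices: generators, primitives for closed forms, and homotopies. Each choice is made linear (Lemma \ref{lem:reynolds}) and averaged by the Reynolds operator of the reductive group $G \times \Gm$. The averaging is exact and preserves the $\Gm$-weight grading, so all choices can be made homogeneous. Because $\omega$ has weight one, the resulting $\widetilde f$ has $\Gm$-weight one. Since it is a function on $V \times \Gm$, it must be $\widetilde f = t\cdot s(x)$, and $G$-invariance of $\widetilde f$ forces $s$ to be $\chi^{-1}$-semi-invariant.

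Minimality at $s_0$ gives $s(0) = 0$ and $ds(0) = 0$. Indeed, a nonzero linear term of $s$ would produce a non-minimal differential $d(\zeta)$ or $d(\widetilde y_i)$ at the base point.

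\emph{Step 3: descend.} Take $\Gm$-invariants. By Proposition \ref{prop:equiv_sympl}(1), $\widetilde A^{\Gm} = A_G$. The equivalence $\widetilde A \simeq \cO(S'\times\Gm)$ is $\Gm$-equivariant, so it descends to a $G$-equivariant equivalence $S' \simeq \Delta\mathrm{loc}_G(s)$ sending $s_0 \mapsto 0$.

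By Proposition \ref{prop:equiv_sympl}(2), the Liouville forms match, $t\alpha_{\mathrm{std}} \leftrightarrow \lambda$. The contact form is recovered as the contraction of $\omega$ with $t\,\partial_t$. Hence the contact structure on $S'$ matches the standard one, provided the Darboux equivalence can be chosen to respect $\lambda$ and not just $\omega$; this is discussed below.

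The description of the generators of $A_G$ then follows by reading off Remark \ref{rem:generator_table}: $V\otimes\chi^{-1}$ and $\chi^{-1}$ in degree $-1$, and $\mathfrak{g}\otimes\chi^{-1}$ in degree $-2$.

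\emph{Main obstacle.} The hardest step is making the KPS Darboux equivalence $\Gm$-weight-homogeneous in a way that matches \emph{exact} data, $\lambda$ and not just $\omega = d_{\DR}\lambda$. The Darboux equivalence only controls $\omega$, and two weight-one primitives of the same $\omega$ can differ by a closed weight-one $1$-form.

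The plan for this step is as follows. A closed weight-one form $\beta$ satisfies $\beta = \mathcal{L}_{t\partial_t}\beta = d_{\DR}(\iota_{t\partial_t}\beta)$, by Cartan's formula. So the difference of primitives is exact, with primitive $\iota_{t\partial_t}\beta$, and this primitive is of weight one. One then absorbs it by a weight-zero change of the contact coordinate $z$, namely $z \mapsto z + (\text{weight-one function})/t$.

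Any residual homotopies are made $G$-equivariant with $R_\chi$ via Lemma \ref{lem:reynolds}(2). Once these homotopies are equivariant, the descended identification $S' \simeq \Delta\mathrm{loc}_G(s)$ is $G$-equivariant as a map of contact structures, not only of underlying derived schemes.
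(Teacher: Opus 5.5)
Your outline is sound, but it follows a genuinely different route from the paper's.

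\textbf{The paper's route.} It runs the \emph{contact} Darboux induction of \cite[Theorem 3.7]{kib2} downstairs, equivariantized at the $G$-fixed point $s_0$, where Proposition \ref{prop:equiv_standard_form} and Lemma \ref{lem:reynolds} apply verbatim. This puts the trivialized form $\alpha_0$ itself into the shape $d_{\DR}z+\sum_i y_i\,d_{\DR}x_i$. The semi-invariance of $\alpha_0$ then forces the weights in degrees $0,-1$, and through $d(z)=s$ it forces $\chi^{-1}$-semi-invariance of $s$. The symplectification enters only in Step 5, to identify the degree $-2$ generators via \cite[Proposition 3.13]{KPS}.

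\textbf{Your route.} You put $\omega$ into homogeneous Darboux form upstairs and read off the discriminant-locus shape from weights: a weight-one $\widetilde f$ on $V\times\Gm$ must be $t\cdot s$. You then descend by $\Gm$-invariants via Proposition \ref{prop:equiv_sympl}. This buys independence from the contact Darboux theorem, and it explains why the chart must be a discriminant locus. The cost is that you need a weight-homogeneous, $G\times\Gm$-equivariant symplectic Darboux theorem at a point fixed by neither group. The stabilizer of $(s_0,1)$ in $G\times\Gm$ is the twisted diagonal $\{(g,\chi(g)^{-1})\}\cong G$, and in $G$ alone it is only $\ker\chi$. That is not what \cite{KPS} states; the paper leans on it only for the degree $-2$ part. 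Unwound through $S'\times\Gm\cong(G\times\Gm)\times^{G}S'$, your homogeneous statement amounts to the same $G$-equivariant induction at $s_0$ that the paper performs downstairs.

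\textbf{Your ``main obstacle''.} The fix you propose is wrong, but it is also unnecessary. Let $E=t\,\partial/\partial t$ be the weight derivation. Both $\lambda=t\alpha_0$ and $t\alpha_{\mathrm{std}}$ are horizontal ($\iota_E=0$) and of weight one, so Cartan's formula gives $\lambda=\iota_E\,d_{\DR}\lambda$ on the nose. Your own identity $\beta=d_{\DR}(\iota_E\beta)$, applied to a horizontal difference $\beta$, gives $\beta=0$ rather than an exact correction.

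Homotopically the same holds. A path of closed $2$-forms $\Phi^*\omega\sim\omega_{\mathrm{std}}$ has components only in form degree $\geq 2$, so its $2$-form part is $d\theta^{(2)}$. Contracting with $E$, which the $\Gm$-equivariant $\Phi$ intertwines, gives $\Phi^*\lambda-t\alpha_{\mathrm{std}}=\pm d(\iota_E\theta^{(2)})$, a null-homotopic $1$-form.

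The shift $z\mapsto z+h$ fails on two counts:
\begin{enumerate}[(i)]
\item it would absorb only the $t\,d_{\DR}h$ part of $d_{\DR}(th)=t\,d_{\DR}h+h\,d_{\DR}t$;
\item it replaces $d(z)=s$ by $s+d(h)$ and breaks the term $d_{\DR}t\,d_{\DR}\zeta$ of $\omega$.
\end{enumerate}

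Minor point: $s(0)=0$ holds because $s_0$ lies in the classical truncation, not because of minimality. Minimality gives $ds(0)=0$.
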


\begin{proof}
There are five steps. Steps 1 and 2 make the construction of
\cite[Theorem 3.7]{kib2} equivariant: the contact line is trivialized by a
semi-invariant section, and every choice in the minimal-standard-form induction
is averaged. Step 3 records what this produces, a chart whose degree $-2$
generators the construction leaves unconstrained. Steps 4 and 5 pin down the
weights and, on the symplectification, the differentials of those generators,
which is where the contact moment map appears.
\medskip

\bfem{Step 1 (Trivialization).} By the semi-invariant trivialization of
\S\ref{subsec:slices}, after shrinking $S$ we may trivialize $\cL$ by a
$\chi$-semi-invariant section, replacing
$\alpha$ by an ordinary $1$-form $\alpha_0$ of degree $-1$ satisfying
$g^* \alpha_0 = \chi(g)^{-1} \alpha_0$.
\medskip

\bfem{Step 2 (Equivariantizing the induction).}
The construction of a minimal standard form presentation and of the Darboux
coordinates for $\alpha_0$ in the proof of \cite[Theorem 3.7]{kib2}
(following \cite[Section 4]{Brav}) is an induction on degree whose steps are of
six kinds. Choices of generators of homotopy groups and of finite-dimensional
generating subspaces of obstruction modules, lifts along surjections of modules,
and chain homotopies contracting exact forms, in particular the passage from
closed to exact of \cite[Proposition 5.7]{Brav}, are made equivariant by Lemma
\ref{lem:reynolds}(1),(2) and Proposition \ref{prop:equiv_standard_form}.

Choices of a basis dual to a non-degenerate pairing are available equivariantly
because rational $G$-modules are semisimple in characteristic zero. 

Zariski
localization at the marked point, used twice to invert bundle maps that are
isomorphisms there, stays equivariant because the isomorphism locus $S_0$ is
$G$-invariant and contains $s_0$: its complement is closed and $G$-invariant, so
its ideal $I \subseteq H^0(\cO(S))$ is a $G$-stable submodule containing some
$f_0$ with $f_0(s_0) \neq 0$, and since evaluation at the fixed point $s_0$
commutes with the Reynolds operator, $f := R(f_0)$ is invariant with
$f(s_0) = f_0(s_0) \neq 0$ and $s_0 \in D(f) \subseteq S_0$. 

The rescaling of
the contact generator by the unit $u := \iota_{\partial/\partial z}\alpha_0$
also stays equivariant: $u$ is $\psi$-semi-invariant for some character $\psi$,
evaluating at $s_0$ gives $u(s_0) = \psi(g)u(s_0)$ with $u(s_0) \neq 0$, so
$\psi = 1$, $u$ is invariant, and the rescaled generator keeps its weight. 

The degree-$0$ generators are chosen as a $G$-stable
lift $V^\vee$ of the cotangent space of $S$ at $s_0$.
\smallskip

\bfem{Step 3 (The chart).} This produces a $G$-invariant affine open $S' \ni s_0$
and a $G$-equivariant
equivalence of $S'$ with the spectrum of a standard-form cdga $A$ on which $G$
acts linearly on the generators, with Darboux coordinates in which $\alpha_0 =
d_{\DR}z + \sum_i y_i\, d_{\DR}x_i$, minimal at $s_0$ in degrees $0$ and $-1$.
Minimality in degree $-2$ is not asserted, and by Proposition
\ref{prop:degree_two}(2), applied to the model with which Steps 4 and 5 identify
the chart, it fails whenever $d\chi \neq 0$. Besides $x_i, y_i, z$, the
cdga $A$ carries $m = \dim G$ generators in degree $-2$, the relative dimension
of the atlas $S' \to [S'/G]$. The normal form of \cite[Theorem 3.7]{kib2}
constrains neither these generators nor their differentials.
\smallskip

\bfem{Step 4 (Weights in degrees $0$ and $-1$).} Semi-invariance of $\alpha_0$
forces the weights of the generators in degrees $0$ and $-1$. Writing the degree-$0$ generators as a basis of $V^\vee$, the relation
$g^* \alpha_0 = \chi(g)^{-1} \alpha_0$ holds if and only if the span of the $y_i$
is the $G$-module $V \otimes \chi^{-1}$ and $z$ spans $\chi^{-1}$. The structural
equation $d(z) = s$ then forces $s(g \cdot x) = \chi(g)^{-1} s(x)$, and
$d(y_i) = \partial s/\partial x_i$ is equivariant for these weights.
Since $s_0$ lies in the classical truncation
$\{s = 0\} \cap \mathrm{Crit}(s)$, we have $s(0) = 0$ and $ds(0) = 0$.
\medskip

\bfem{Step 5 (The generators in degree $-2$).} It remains to identify these, which
we do on the symplectification. By the semi-invariant trivialization, the
principal $\Gm$-bundle associated to $\cL|_{S'}$ is $\Spec(\widetilde{A})$ with
$\widetilde{A} = A[t,t^{-1}]$, so the symplectification of $[S'/G]$ is $[\Spec(\widetilde{A})/G]$
with Liouville form $\lambda = t\alpha_0$ and $\omega = d_{\DR}\lambda$. Setting
$\widetilde{y}_i := t y_i$ and $\zeta := z$, Proposition \ref{prop:equiv_sympl}(1)
identifies $x_i, t$ in degree $0$ and $\widetilde{y}_i, \zeta$ in degree $-1$
with the Koszul--Tate generators of $\dCrit(\widetilde{f})$ for the
$G$-invariant potential $\widetilde{f}(x,t) = t \cdot s(x)$ on $V \times \Gm$.
The degree $-2$ generators of $A$ have $\Gm$-weight $0$, so multiplied by $t$
they are those of $\widetilde{A}$. By the standard form of the derived critical
locus of an invariant function on a quotient stack \cite[Proposition 3.13]{KPS},
\cite[Theorem 2.10]{BenBassat2015} they are indexed by $\mathfrak{g}$, in agreement with
the count $m = \dim G$, and $\widetilde{w}_\xi$ has differential the shifted
moment map
\[ d(\widetilde{w}_\xi)
= \sum_{i=1}^n a_\xi(x_i)\, \widetilde{y}_i
+ \langle d\chi, \xi \rangle\, t \zeta , \]
the image of $a_\xi$ under the identification of the degree $-1$ generators with
$T_{V \times \Gm}$, where $a_\xi(t) = \langle d\chi, \xi \rangle t$ because $G$
acts on $t$ through $\chi$. Setting $w_\xi := t^{-1}\widetilde{w}_\xi$ gives
\[ d(w_\xi) = \sum_{i=1}^n a_\xi(x_i)\, y_i + \langle d\chi, \xi \rangle\, z , \]
so $A$ is the algebra $A_G$ of Definition \ref{def:equiv_discriminant} and
$\Spec(A) = \Delta\mathrm{loc}_G(s)$, with the weights of Lemma
\ref{lem:equiv_KT_model}(1). For $\dim G = 0$ there are no such generators and
$A$ is the Koszul--Tate algebra of Lemma \ref{lem:KT_model}.
\end{proof}

Proposition \ref{prop:equiv_darboux} is the contact counterpart of the
equivariant Darboux theorems for \cite{Brav} established in the symplectic
setting by Descombes \cite{Descombes} and Park \cite{ParkPush}, and its proof is
the same equivariantization procedure applied to the construction of
\cite[Theorem 3.7]{kib2}, together with the identification of the degree $-2$
part of the chart, which that construction leaves free and which the geometry of
the quotient fixes to be the contact moment map. Its graded, weight-one aspect is
the one arising in dimensional reduction \cite{Kinjo}.

\subsection{The Global Statement}
\label{subsec:global}

\begin{theorem}[Equivariant Contact Darboux Theorem] \label{thm:contact_darboux}
Let $Z$ be a quasi-separated, locally finitely presented $-1$-shifted contact
derived Artin stack over $\K$ with affine stabilizers.
\begin{enumerate}
\item $Z$ admits a smooth atlas by contact Darboux charts (Definition
\ref{def:equiv_discriminant}), each carrying as many generators in degree $-2$
as its relative dimension. After a trivialization of the
contact line, the pullback of the contact form along a chart is
equivalent to the standard form $\alpha$ except in degree $-2$. If $Z$ admits an
\'etale atlas, for instance if $Z$ is a derived scheme or a derived
Deligne--Mumford stack, the charts are the contact Darboux schemes
$\Delta\mathrm{loc}(s)$ of Lemma \ref{lem:KT_model} and the equivalence of
contact structures holds without exception.
\item Let $z \in Z(\K)$ be a point whose stabilizer $G = \mathrm{Aut}(z)$ is
linearly reductive, and let $\chi \colon G \to \Gm$ be the character through which
$G$ acts on the fiber $\cL_z$ of the contact line bundle
(\S\ref{subsec:contact}). Then there exist a finite-dimensional
$G$-representation $V$, a $\chi^{-1}$-semi-invariant regular function
$s \in \cO(V)$ with $s(0) = 0$ and $ds(0) = 0$, and a pointed \'etale morphism
of $-1$-shifted contact stacks
\[ ([\Delta\mathrm{loc}_G(s)/G],\, 0) \longrightarrow (Z, z) , \]
carrying the standard contact structure of Proposition \ref{prop:equiv_sympl}(2) to
the given one and inducing an isomorphism of stabilizers at $0$.
\end{enumerate}
\end{theorem}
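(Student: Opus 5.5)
The plan is to prove (2) first and deduce (1) from it together with Berktav's atlas theorem.

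\emph{Part (2).} Apply the \'etale slice theorem (Theorem \ref{thm:AHR}) to the classical truncation of $Z$ at $z$. This gives an affine $\K$-scheme with a $G$-action fixing a point $w$, and a pointed \'etale morphism to $t_0(Z)$ inducing an isomorphism of stabilizers. Since \'etale morphisms to a derived stack are determined by \'etale morphisms to its classical truncation, this lifts uniquely to an affine derived $\K$-scheme $S$ of finite presentation. The scheme $S$ carries a $G$-action fixing a classical point $s_0$, and comes with a pointed \'etale morphism $u\colon [S/G]\to Z$ that still induces an isomorphism of stabilizers. The $G$-equivariance is obtained by deforming the slice through the square-zero extensions of the Postnikov tower, using linear reductivity to average the lifts as in Lemma \ref{lem:reynolds}.

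Next, pull back the contact structure. Since $u$ is \'etale, we have $\TT_{[S/G]}\simeq u^*\TT_Z$, so $(u^*\cL,u^*\alpha)$ is a $-1$-shifted contact structure on $[S/G]$. The symplectification commutes with this base change (\S\ref{subsec:contact}). The character of $G$ on $(u^*\cL)_{s_0}=\cL_z$ is $\chi$, because the stabilizers are identified.

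Proposition \ref{prop:equiv_darboux} then supplies a $G$-invariant affine open $S'\ni s_0$, a representation $V$, a $\chi^{-1}$-semi-invariant $s$ with $s(0)=0$ and $ds(0)=0$, and an equivariant equivalence $S'\simeq\Delta\mathrm{loc}_G(s)$ carrying the contact structure to the standard one. Composing the open immersion $[S'/G]\hookrightarrow[S/G]$ with $u$ gives the required \'etale morphism $([\Delta\mathrm{loc}_G(s)/G],0)\to(Z,z)$. It still induces an isomorphism of stabilizers at $0$, since $S'$ is $G$-invariant and contains $s_0$.

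\emph{Part (1).} Take the smooth Darboux atlas of \cite[Theorem 3.7]{kib2}. Locally on each chart, trivialize the contact line. By Step 3 of the proof of Proposition \ref{prop:equiv_darboux} (the non-equivariant case $G$ trivial), each chart is a standard-form cdga with Darboux coordinates $x_i,y_i,z$. The number of its degree $-2$ generators is forced by the relative dimension: for a smooth chart $U\to Z$ of relative dimension $m$, $\LL_{U/Z}$ is a vector bundle of rank $m$ in degree $0$. Hence $\TT_U\to u^*\TT_Z$ has fiber of rank $m$, and after non-degeneracy is transported through the contact distribution $\cK$ this produces exactly $m$ generators in degree $-2$. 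The pullback form agrees with $\alpha$ except for the contributions of these generators, because the form is non-degenerate only relative to $Z$.

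When the atlas is \'etale, $m=0$. Then there are no degree $-2$ generators, the chart is exactly the algebra of Lemma \ref{lem:KT_model}, so it is $\Delta\mathrm{loc}(s)$, and the equivalence of contact forms holds on the nose. For derived schemes and Deligne--Mumford stacks, Zariski or \'etale atlases exist, and this is the case $\dim G=0$ of Proposition \ref{prop:degree_two}(1).

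\emph{Main obstacle.} I expect the hard step to be the lift in (2) of the classical \'etale slice to a $G$-equivariant derived slice $S$ with an \'etale map $[S/G]\to Z$. The approach I would try first is to build $S$ inductively up the Postnikov tower of $Z$ pulled back to $[S_{\mathrm{cl}}/G]$. \'Etaleness makes each square-zero extension unobstructed and unique. Linear reductivity makes the $G$-action extend, because the relevant $\mathrm{Ext}^1$-classes are $G$-modules whose invariant parts can be lifted by the Reynolds operator. Finite presentation of $Z$ ensures the tower stabilizes to a derived affine scheme of finite presentation. Alternatively, one could cite the derived version of the Alper--Hall--Rydh theorem, if permitted.
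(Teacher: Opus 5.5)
Your proposal matches the paper's proof. For (2), both take the Alper--Hall--Rydh slice of the truncation and lift it to a derived $G$-equivariant slice. The paper cites invariance of the \'etale site under thickenings and \cite[Proposition 3.13]{KPS} for this lift; your obstruction sketch also works, since the obstructions lie in higher quasi-coherent cohomology on $[S_{\mathrm{cl}}/G]$, which vanishes for linearly reductive $G$. Both then pull back the contact structure and apply Proposition \ref{prop:equiv_darboux}. That proposition already matches the contact structures, so the paper's symplectification square is only a consistency check, and omitting it is fine. For (1), both use \cite[Theorem 3.7]{kib2} plus the trivial-$G$ case of Proposition \ref{prop:equiv_darboux} on \'etale charts.

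Two small corrections. First, Proposition \ref{prop:equiv_darboux} cannot be invoked on a non-\'etale chart, because such a chart carries no contact structure of its own; there the standard form must come from \cite[Theorem 3.7]{kib2} alone. Second, the Postnikov tower of the slice need not stabilize: $A_G$ in Example \ref{ex:equiv_dloc} already has unbounded cohomology. You should instead pass to the limit and obtain finite presentation from \'etaleness over $Z$.
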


\begin{proof}
Part (1) is the atlas of \cite[Theorem 3.7]{kib2}, whose charts are
contact Darboux charts in the sense of Definition \ref{def:equiv_discriminant}. When the atlas can be taken \'etale, Proposition \ref{prop:equiv_darboux} at
trivial $G$ identifies each chart with $\Delta\mathrm{loc}(s)$. Part (2) reduces to the affine theorem in four steps: Step 1 produces an
affine slice through $z$, Step 2 applies Proposition \ref{prop:equiv_darboux}
on it, and Steps 3 and 4 verify that the resulting \'etale morphism matches the
contact structures, by comparing the two symplectifications.
\begin{enumerate}
\item By the contact Darboux theorem for derived Artin stacks \cite[Theorem
3.7]{kib2}, $Z$ admits a smooth atlas by affine charts in standard
contact form, each carrying as many generators in degree $-2$ as its relative
dimension, the comparison of the trivialized contact form with the standard one,
exact except in degree $-2$, being part of the statement of \cite[Theorem
3.7]{kib2}. If the atlas may be taken \'etale there are none, and
Proposition \ref{prop:equiv_darboux} with trivial $G$ identifies each chart with
$\Delta\mathrm{loc}(s)$ together with its contact structure.

\item \bfem{Step 1 (Slice).} The classical truncation of $Z$ is quasi-separated,
locally of finite type over $\K$, with affine stabilizers, and $G =
\mathrm{Aut}(z)$ is linearly reductive. By Theorem \ref{thm:AHR} there is an affine scheme with
$G$-action fixing a point $s_0$ and a pointed \'etale morphism from its
quotient stack to the truncation of $Z$, inducing an isomorphism of stabilizers
at $s_0$. Since the \'etale site is invariant under derived thickenings, this
morphism lifts uniquely to a pointed \'etale morphism $u \colon [S/G] \to Z$
with $S$ affine derived and $G$ acting on $S$ fixing $s_0$ (see
\cite[Proposition 3.13]{KPS} for the derived enhancement). The contact structure pulls back along the \'etale
$u$ to a $-1$-shifted contact structure on $[S/G]$, and $G$ acts on
$\cL_{s_0}$ through the character $\chi$, by the functoriality recorded in
\S\ref{subsec:contact}.
\medskip

\noindent \bfem{Step 2 (Equivariant Darboux on the slice).} By Proposition
\ref{prop:equiv_darboux}, after shrinking $S$ to a $G$-invariant affine open
neighborhood of $s_0$ we may assume \[S \simeq \Delta\mathrm{loc}_G(s) =
\Spec(A_G)\] $G$-equivariantly, with $A_G$, $V$, $s$, and $\alpha = d_{\DR}z +
\sum_i y_i\, d_{\DR}x_i$ as in the proposition.
\medskip

\noindent \bfem{Step 3 (Symplectification of the chart).} Symplectification commutes with
the \'etale morphism $u$ (\S\ref{subsec:contact}), so it suffices to symplectify
$[\Spec(A_G)/G]$. The contact line bundle over $\Spec(A_G)$ is equivariantly
trivialized by construction, so the associated principal $\Gm$-bundle is
$\Spec(\widetilde{A}_G)$ with $\widetilde{A}_G := A_G[t, t^{-1}]$, where $\Gm$
scales $t$ with weight $1$ and $G$ acts on $t$ through $\chi$. The tautological
Liouville form is $\lambda = t\alpha$ and the symplectic form is
$\omega = d_{\DR}(t\alpha)$.

Set 
\[\widetilde{y}_i := t y_i,\quad \zeta := z, \quad \text{and}\quad \widetilde{w}_\xi := t w_\xi.\]
By Proposition \ref{prop:equiv_sympl}, $\widetilde{A}_G$ is the standard-form
Koszul--Tate cdga representing the derived
critical locus $\dCrit([\,\widetilde{f}\,/\,G\,])$ of the potential $\widetilde{f}(x,t) = t \cdot s(x)$ on the
quotient stack $[(V \times \Gm)/G]$, with Koszul--Tate generators $x_i, t$ in
degree $0$, $\widetilde{y}_i, \zeta$ in degree $-1$ and $\widetilde{w}_\xi$ in
degree $-2$. The potential is homogeneous of $\Gm$-weight $1$ and $G$-invariant
since $t$ and $s$ carry the characters $\chi$ and $\chi^{-1}$, its standard
Darboux form is $\omega = d_{\DR}(t\alpha)$, and the weight $0$ part of $t\alpha$
is the standard Darboux contact $1$-form $\alpha = d_{\DR}z + \sum_{i=1}^n y_i\,
d_{\DR}x_i$ in local coordinates \cite{kib1, kib2}.

Note that $A_G = \widetilde{A}_G^{\Gm}$, so $\Spec(A_G)$ is the $\Gm$-reduction
of the symplectified chart.
\medskip

\noindent \bfem{Step 4 (Conclusion).} We obtain a commutative square, pointed at $z$ and
inducing an isomorphism of stabilizers, with \'etale horizontal arrows and
Cartesian by Step 3:
\[
\begin{tikzcd}[scale=1.5, row sep=large, column sep=large]
{\dCrit([\,t \cdot s\,/\,G\,])} \arrow[r, "\widetilde{u}"]
\arrow[d, "\pi_{\Gm}"'] &
\widetilde{Z} \arrow[d, "p"] \\
{[\Delta\mathrm{loc}_G(s) / G]} \arrow[r, "u"] & Z
\end{tikzcd}
\]
The morphism $u$ is one of contact stacks: the contact structure on $[S/G]$ is
the pullback of the given one along $u$ by Step 1, and Step 2 identifies it with
the standard structure of $[\Delta\mathrm{loc}_G(s)/G]$. As $u$ is pointed
\'etale, carries the standard contact structure to the given one, and induces an
isomorphism of stabilizers at $0$ by Step 1, this proves (2).
\end{enumerate}
\end{proof}

In particular, $Z$ is smooth-locally around $z$ equivalent to the quotient stack
$[\Delta\mathrm{loc}_G(s)/G]$. The same statement for the naive quotient
$[\Delta\mathrm{loc}(s)/G]$ holds only if $\dim \ker(\chi) = 0$, by Proposition
\ref{prop:degree_two}.
\vspace{2em}

\begin{corollary}[Schematic contact Darboux]\label{cor:schematic}
Let $X$ be a locally finitely presented $-1$-shifted contact derived
$\K$-scheme and $x \in X(\K)$. Then there exists 
\begin{enumerate}[(i)]
\item a Zariski open $X' \subseteq X$
containing $x$, 
\item a smooth affine $\K$-scheme $V$, and a regular function
$s \in \cO(V)$ with $s(0) = 0$ and $ds(0) = 0$, together with 
\item an equivalence of
$-1$-shifted contact derived schemes $X' \simeq \Delta\mathrm{loc}(s)$ carrying
$x$ to $0$.
\end{enumerate}
Thus, every $-1$-shifted contact derived scheme is Zariski-locally the
derived discriminant locus of a regular function. Via Lemma \ref{lem:KT_model}
this is the schematic Darboux theorem of \cite[Section 3]{kib1} in the form
used here.
\end{corollary}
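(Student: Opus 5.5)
The plan is to deduce Corollary \ref{cor:schematic} from Proposition \ref{prop:equiv_darboux} applied with the trivial group. Since $X$ is a derived scheme, it admits a Zariski cover by affine derived schemes of finite presentation. Choose an affine open $S \subseteq X$ containing $x$ and take $G$ to be trivial, viewed as acting on $S$ and fixing the classical point $s_0 = x$. Then $[S/G] = S$ carries the restricted $-1$-shifted contact structure $(\cL|_S, \alpha|_S)$, and $\chi$ is the trivial character. All hypotheses of Proposition \ref{prop:equiv_darboux} then hold: $G$ is trivially linearly reductive, and $S$ is affine of finite presentation by the local finite presentation of $X$.

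The proposition supplies three things. First, an open $S' \subseteq S$ containing $x$; it is Zariski open in $X$, and we set $X' := S'$. Second, a finite-dimensional representation $V$, which for trivial $G$ is just an affine space and hence a smooth affine $\K$-scheme. Third, a function $s \in \cO(V)$ with $s(0)=0$ and $ds(0)=0$, together with an equivalence $S' \simeq \Delta\mathrm{loc}_G(s)$ carrying $x$ to $0$ and the contact structure to the standard one.

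It remains to remove the subscript $G$. By Lemma \ref{lem:equiv_KT_model}(2), $\Delta\mathrm{loc}_G(s) = \Delta\mathrm{loc}(s)$ because $\dim G = 0$: there are no generators $w_\xi$, and $A_G$ is exactly the Koszul--Tate algebra $A$ of Lemma \ref{lem:KT_model}. With $\chi$ trivial, the standard contact structure of Definition \ref{def:equiv_discriminant} has line $\cO$ and form $\alpha = d_{\DR}z + \sum_i y_i\, d_{\DR}x_i$. By Lemma \ref{lem:KT_model}, this is the contact structure that $\Delta\mathrm{loc}(s)$ carries as a Legendrian intersection (Definition \ref{def:discriminant}). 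Composing the two identifications gives the required equivalence of $-1$-shifted contact derived schemes $X' \simeq \Delta\mathrm{loc}(s)$.

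The main point to check is that the open produced in Step 1 of the proof of Proposition \ref{prop:equiv_darboux} is a genuine Zariski open of $X$. Step 1 uses the semi-invariant trivialization of $\cL$, which here is an ordinary trivialization near $x$ given by a section $e$ with $e(x)\neq 0$; its non-vanishing locus is Zariski open. The localizations in Step 2 are of the form $D(f)$, again Zariski. So no étale base change enters when $G$ is trivial, and this is what upgrades the étale-local statement of Theorem \ref{thm:contact_darboux} to the Zariski-local one claimed here. Identifying $x$ with the origin of $V$ is part of the conclusion of the proposition, since the degree-$0$ generators are chosen as a lift of $\mathfrak{m}_x/\mathfrak{m}_x^2$ centred at $x$.
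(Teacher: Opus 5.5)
Your proposal is correct and follows the paper's proof. The paper likewise applies Proposition \ref{prop:equiv_darboux} with trivial $G$ to an affine open $S \ni x$ and reads off $S' \simeq \Delta\mathrm{loc}(s)$ with its standard contact structure, with $s(0)=0$ and $ds(0)=0$. Your extra checks are sound supplements to what the paper states more briefly: that the shrinkings are Zariski rather than \'etale, and that Lemmas \ref{lem:equiv_KT_model}(2) and \ref{lem:KT_model} remove the subscript $G$.
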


\begin{proof}
Choose an affine open $S \subseteq X$ of finite presentation containing $x$ and
apply Proposition \ref{prop:equiv_darboux} with $G$ trivial: it returns an
affine open $S' \subseteq S$ containing $x$ and an equivalence $S' \simeq
\Delta\mathrm{loc}(s)$ carrying $x$ to $0$ and the contact structure to the
standard one, with $s(0) = 0$ and $ds(0) = 0$ since $x$ lies in the classical
truncation $\{s = 0\} \cap \mathrm{Crit}(s)$.
\end{proof}

A $\chi^{-1}$-semi-invariant function on $V$ is the same datum as a $G$-invariant
section of the equivariant line bundle $\cO_V \otimes \chi^{-1}$, equivalently a
section of a line bundle on $[V/G]$. This reconciles the phrasing ``relative
section'' of the abstract with the phrasing ``regular function'' used in Theorem
\ref{thm:A} and here: the potential $s$ is a function on the atlas and a
relative section on the quotient. In the same way $\Delta\mathrm{loc}_G(s)$ is the derived zero locus
of the $1$-jet of $s$ taken relative to $[V/G]$, the generators $w_\xi$ resolving
the relations imposed by the $\mathfrak{g}$-action.

\subsection{Quivers with Potential}
\label{subsec:quivers}

Quivers with potential are the case in which the degree $-2$ generators are
unavoidable at every point. We keep the notation of
\S\ref{subsec:quiverprelim}: $Q$ is a finite quiver, $W$ a potential, $d$ a
dimension vector, $V := \mathrm{Rep}(Q,d)$, $G := G_d$, and
$W_d = \mathrm{Tr}(W)$. In the Koszul--Tate algebra of Definition
\ref{def:equiv_discriminant} attached to this datum we write $y_a$ for the
degree $-1$ generator dual to $x_a$ under the trace pairing, so that
$y_a \in \Hom(\K^{d_{h(a)}}, \K^{d_{t(a)}})$ and $d(y_a) = \partial_a W$.

\begin{proposition}[Contact models from quivers with potential]\label{prop:quiver}
Let $\chi$ be the trivial character of $G_d$.
\begin{enumerate}
\item $(G_d, \chi, V, W_d)$ is a datum as in Definition
\ref{def:equiv_discriminant}, and $[\Delta\mathrm{loc}_{G_d}(W_d)/G_d]$ is a
$-1$-shifted contact derived Artin stack with trivial contact line.
\item The contact moment map is the moment map of the doubled quiver: for
$\xi = (\xi_i) \in \mathfrak{g} = \bigoplus_i \mathfrak{gl}_{d_i}$,
\[ \langle \mu, \xi \rangle = \sum_{i \in Q_0} \mathrm{Tr}(\xi_i\, \mu_i),
\qquad
\mu_i = \sum_{h(a) = i} x_a y_a \; - \sum_{t(a) = i} y_a x_a . \]
\end{enumerate}
\end{proposition}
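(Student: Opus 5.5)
For part (1), we check the hypotheses of Definition \ref{def:equiv_discriminant}. $G_d=\prod_i GL_{d_i}$ is reductive, hence linearly reductive in characteristic zero. $V=\mathrm{Rep}(Q,d)$ is a finite-dimensional representation. $W_d=\mathrm{Tr}(W)$ is $G_d$-invariant, as recalled in \S\ref{subsec:quiverprelim}. With $\chi$ trivial, $\chi^{-1}$-semi-invariance is plain invariance, so the datum is admissible. Lemma \ref{lem:equiv_KT_model}(1) then gives $d^2=0$ and the $G_d$-action on $A_{G_d}$; here $d\chi=0$, so the $z$-term of the moment map drops. Proposition \ref{prop:equiv_sympl}(2) gives the $-1$-shifted contact structure on $[\Delta\mathrm{loc}_{G_d}(W_d)/G_d]$ with contact line $\cO\otimes\chi=\cO$. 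It is a derived Artin stack because it is the quotient of an affine derived scheme of finite presentation by a smooth affine group.

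For part (2), we compute $a_\xi(x_a)$ from the action $(g\cdot x)_a=g_{h(a)}x_ag_{t(a)}^{-1}$. Differentiating along $g=\exp(\epsilon\xi)$ gives the infinitesimal action $\xi_{h(a)}x_a-x_a\xi_{t(a)}$ on the matrix $x_a$. We must be careful with the pullback convention $a_\xi(h)(x)=\frac{d}{d\epsilon}h(\exp(\epsilon\xi)\cdot x)$ on coordinate functions: on the matrix-valued coordinate function $x_a$ it yields exactly $\xi_{h(a)}x_a-x_a\xi_{t(a)}$. Next we write the sum $\sum_i a_\xi(x_i)\,y_i$ in terms of the trace pairing. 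Since $y_a$ is dual to $x_a$, the pairing of a matrix $M\in\Hom(\K^{d_{t(a)}},\K^{d_{h(a)}})$ with $y_a$ is $\mathrm{Tr}(M y_a)$, so
\[
\langle\mu,\xi\rangle=\sum_{a\in Q_1}\mathrm{Tr}\bigl((\xi_{h(a)}x_a-x_a\xi_{t(a)})y_a\bigr).
\]
By cyclicity of the trace, $\mathrm{Tr}(\xi_{h(a)}x_ay_a)-\mathrm{Tr}(\xi_{t(a)}y_ax_a)$. Grouping by vertex $i$, the first term collects the arrows with $h(a)=i$ and the second those with $t(a)=i$. This yields $\sum_i\mathrm{Tr}(\xi_i\mu_i)$ with $\mu_i$ as displayed, where $x_ay_a\in\mathfrak{gl}_{d_{h(a)}}$ and $y_ax_a\in\mathfrak{gl}_{d_{t(a)}}$ are correctly typed.

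As a sanity check, we verify $d(\langle\mu,\xi\rangle)=0$ directly. It equals $\sum_a\mathrm{Tr}([\xi,x]_a\,\partial_aW)$, the derivative of $W_d$ along $a_\xi$, which vanishes by invariance. This is consistent with Lemma \ref{lem:equiv_KT_model}(1).

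The main obstacle is sign and ordering bookkeeping. We must make sure that the identification $d(y_a)=\partial_aW$ under the trace pairing, with $y_a\in\Hom(\K^{d_{h(a)}},\K^{d_{t(a)}})$, matches the convention used in $a_\xi$. We also need the result to agree exactly with \eqref{eq:preproj} under $y_{a^*}\leftrightarrow y_a$, rather than with its negative. If the pullback convention were to introduce a sign, we would absorb it by replacing $\xi$ with $-\xi$, i.e.\ by reindexing the generators $w_\xi$. This leaves the statement unchanged up to the linear automorphism $\xi\mapsto-\xi$ of $\mathfrak{g}$.
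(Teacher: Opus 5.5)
Your proof is correct and follows the paper's argument essentially step for step. Part (1) uses invariance of $W_d$ together with Proposition \ref{prop:equiv_sympl}(2), and part (2) uses $a_\xi(x_a)=\xi_{h(a)}x_a-x_a\xi_{t(a)}$, the trace pairing, cyclicity, regrouping by head and tail, and the same check $d\langle\mu,\xi\rangle=a_\xi(W_d)=0$. Your closing hedge about a possible sign is unnecessary, since your own computation already gives the stated sign without reindexing the $w_\xi$; such a reindexing would in any case only prove the identity up to $\xi\mapsto-\xi$.
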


\begin{proof}
(1) The trace of a cyclic word is invariant under simultaneous conjugation, so
$W_d$ is $G_d$-invariant, which for trivial $\chi$ is the semi-invariance
required in Definition \ref{def:equiv_discriminant}. Proposition
\ref{prop:equiv_sympl}(2) then supplies the contact structure, with contact
line $\cO \otimes \chi = \cO$.

(2) With the convention of Definition \ref{def:equiv_discriminant},
\[ a_\xi(x_a) = \frac{d}{d\epsilon}\Big|_{\epsilon = 0}
\exp(\epsilon \xi_{h(a)})\, x_a \exp(-\epsilon \xi_{t(a)})
= \xi_{h(a)} x_a - x_a \xi_{t(a)} , \]
and $\langle d\chi, \xi \rangle = 0$, so
$\langle \mu, \xi \rangle = \sum_a \mathrm{Tr}\bigl((\xi_{h(a)} x_a - x_a
\xi_{t(a)})\, y_a\bigr)$. Cyclicity of the trace turns the second summand into
$\mathrm{Tr}(\xi_{t(a)}\, y_a x_a)$, and collecting the arrows by head and by
tail gives the stated formula, which is \eqref{eq:preproj} with $y_{a^*}$
replaced by $y_a$. The two differ in where the second variable lives: in
\eqref{eq:preproj} it is a coordinate on $T^*\mathrm{Rep}(Q,d)$, here it is a
Koszul--Tate generator of degree $-1$ with $d(y_a) = \partial_a W$. As a check,
$d\langle \mu, \xi \rangle = a_\xi(W_d) = 0$ by invariance, as Lemma
\ref{lem:equiv_KT_model}(1) requires.
\end{proof}

Three consequences follow from the proposition together with Proposition
\ref{prop:degree_two} and Euler's relation.

\begin{corollary}[Obstruction and truncation]\label{cor:quiver}
In the context of Proposition \ref{prop:quiver}:
\begin{enumerate}
\item At a point $x$ with stabilizer $G_x \subseteq G_d$,
\[ h^{-2}\bigl(\LL_{[\Delta\mathrm{loc}_{G_d}(W_d)/G_d]}\big|_x\bigr)
\simeq \mathrm{Lie}\, G_x . \]
The scalars act trivially on $V$, so $\dim G_x \geq 1$ for every $x$, and
$[\Delta\mathrm{loc}(W_d)/G_d]$ is a local model at no point of the stack.
\item If $W$ is homogeneous of degree $m$, then $W_d$ vanishes on
$\mathrm{Crit}(W_d)$ and
\[\bigl(\Delta\mathrm{loc}_{G_d}(W_d)\bigr)^{\mathrm{cl}} =
\mathrm{Rep}(\mathrm{Jac}(Q,W), d).\]
\item If every cycle of $W$ has length at least $2$, then $W_d(0) = 0$ and
$dW_d(0) = 0$, so the origin lies in the truncation and the presentation of
Definition \ref{def:equiv_discriminant} is minimal there in degree $-2$ in the
sense of Proposition \ref{prop:degree_two}(2).
\end{enumerate}
\end{corollary}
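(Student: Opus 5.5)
The plan is to treat the three parts separately. All three rest on Proposition \ref{prop:quiver}, which says that $\chi$ is trivial and that the stack $[\Delta\mathrm{loc}_{G_d}(W_d)/G_d]$ is $-1$-shifted contact with trivial contact line.

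For (1), I would apply Proposition \ref{prop:degree_two}(1) with $Z := [\Delta\mathrm{loc}_{G_d}(W_d)/G_d]$ at a $\K$-point $x$ of the classical truncation.
\begin{itemize}
\item The stabilizer of $x$ in $Z$ is $G_x \subseteq G_d$.
\item $G_x$ acts on the fiber of the contact line through the restriction of the trivial character $\chi$. So $\ker(\chi|_{G_x}) = G_x$, and the proposition gives $h^{-2}(\LL_Z|_x) \simeq \mathrm{Lie}\,G_x$.
\item The diagonal scalars $\lambda\cdot(\mathrm{id}_{d_i})_i$ act on $x_a$ by $\lambda x_a \lambda^{-1} = x_a$. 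So $\Gm \subseteq G_x$ for every $x$, and $\dim G_x \geq 1$. This assumes $d \neq 0$; if $d = 0$ the statement is vacuous.
\item Hence $\dim \ker(\chi|_{G_x}) > 0$ everywhere. The second clause of Proposition \ref{prop:degree_two}(1) then rules out $[\Delta\mathrm{loc}(W_d)/G_d]$ as a local model at every point.
\end{itemize}
The one subtlety is to apply the proposition with the actual stabilizer $G_x$ rather than $G_d$, noting that the character on $\cL_x$ is the restriction of $\chi$ by the functoriality recorded in \S\ref{subsec:contact}.

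For (2), I would use Euler's relation. Let $m$ be the degree and $E = \sum_a \sum_{jk} (x_a)_{jk}\,\partial/\partial (x_a)_{jk}$ the Euler vector field on $V$.
\begin{itemize}
\item If $W$ is homogeneous of degree $m$, then $W_d$ is a homogeneous polynomial of degree $m$ in the matrix entries. Hence $E(W_d) = m\,W_d$.
\item On $\mathrm{Crit}(W_d)$ all partials vanish, so $m W_d = 0$ there. This needs $m \geq 1$, which holds since cycles have positive length; and $\mathrm{char}\,\K = 0$ lets us divide by $m$.
\item At the level of ideals this says $W_d \in (\partial W_d/\partial x)$, so adding $s = W_d$ to the relations changes nothing.
\item By Lemma \ref{lem:equiv_KT_model}(2), $H^0(A_{G_d}) = \cO(V)/(\partial W_d, W_d) = \cO(V)/(\partial W_d) = \cO(\mathrm{Crit}(W_d))$.
\item By \eqref{eq:crit_jac}, this is $\cO(\mathrm{Rep}(\mathrm{Jac}(Q,W),d))$.
\end{itemize}

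For (3), I would expand $W_d$ as a sum of traces of words of length at least $2$ in the matrix variables. Each such term is a polynomial of degree at least $2$, so $W_d(0) = 0$ and $dW_d(0) = 0$. Therefore Proposition \ref{prop:degree_two}(2) applies at the origin. Since $d\chi = 0$ for trivial $\chi$, the presentation is minimal in degree $-2$ there, and $h^{-2} \simeq \mathfrak{g}_d$, consistent with (1) since $G_0 = G_d$. If $W$ has cycles of length $1$ (loops), their traces are linear and the hypothesis excludes them, which is why it is needed.

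I expect no real obstacle. The most delicate point is the stabilizer bookkeeping in (1): making sure Proposition \ref{prop:degree_two}(1) is invoked for the pointwise stabilizer $G_x$ of the quotient stack, and that the contact line, trivial as a $G_d$-equivariant bundle, restricts to the trivial character on $G_x$.
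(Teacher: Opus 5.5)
Your proposal is correct and follows the paper's proof essentially step for step. For (1) you use Proposition~\ref{prop:degree_two}(1) with $\ker(\chi|_{G_x}) = G_x$ and the central scalars $\Gm \subseteq G_x$; for (2), Euler's relation with Lemma~\ref{lem:equiv_KT_model}(2) and \eqref{eq:crit_jac}; for (3), the degree count and Proposition~\ref{prop:degree_two}(2) at the fixed origin with $d\chi = 0$.

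One small correction: when $d = 0$ the statement is false, not vacuous. Then $G_d$ is trivial and the stack is $\Delta\mathrm{loc}(0)$ itself, so $\dim G_x \geq 1$ and ``local model at no point'' both fail. Thus $d \neq 0$ is a tacit hypothesis of the statement.
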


\begin{proof}
(1) The stabilizer $G_x$ acts on the contact line through $\chi$, which is
trivial, so $\ker(\chi|_{G_x}) = G_x$ and Proposition \ref{prop:degree_two} gives the
displayed isomorphism. The diagonal $\Gm \subseteq G_d$ acts trivially on $V$,
being conjugation by scalars, so it lies in every $G_x$. Hence, $h^{-2} \neq 0$
everywhere, while the cotangent complex of $[\Delta\mathrm{loc}(W_d)/G_d]$ has
amplitude $[-1,1]$.

(2) Euler's relation for the homogeneous function $W_d$ of degree $m$ in the
matrix entries reads $\sum_{a} \mathrm{Tr}(x_a\, \partial_a W) = m\, W_d$, and
each $\partial_a W$ vanishes on $\mathrm{Crit}(W_d)$, so $W_d$ does too as
$\mathrm{char}\,\K = 0$. By Lemma \ref{lem:equiv_KT_model}(2) the classical
truncation is $\{W_d = 0\} \cap \mathrm{Crit}(W_d) = \mathrm{Crit}(W_d)$, which
is $\mathrm{Rep}(\mathrm{Jac}(Q,W), d)$ by \eqref{eq:crit_jac}.

(3) A cycle of length $\ell$ contributes to $W_d$ a polynomial homogeneous of
degree $\ell$ in the matrix entries, so $\ell \geq 2$ for every cycle gives
$W_d(0) = 0$ and $dW_d(0) = 0$. The origin is $G_d$-fixed and $d\chi = 0$, so
Proposition \ref{prop:degree_two}(2) applies.
\end{proof}

The smallest quiver already exhibits the whole structure.

\begin{example}[One loop]\label{ex:jordan}
Let $Q$ be the Jordan quiver, one vertex with one loop $x$,
\[
\begin{tikzcd}[column sep=large]
\bullet \arrow[loop right, "x"]
\end{tikzcd}
\]
Let $W = x^{\ell+1}$ with $\ell \geq 1$, and let $d \geq 1$, so that
$V = \mathfrak{gl}_d$, $G = GL_d$ acts by conjugation and
$W_d = \mathrm{Tr}(x^{\ell+1})$. Then $A_G$ is generated over
$\cO(\mathfrak{gl}_d)$ by $y$ and $z$ in degree $-1$ and $w_\xi$, $\xi \in
\mathfrak{gl}_d$, in degree $-2$, with
\[ d(y) = (\ell+1)\, x^\ell, \qquad d(z) = \mathrm{Tr}(x^{\ell+1}), \qquad
d(w_\xi) = \mathrm{Tr}\bigl(\xi\, [x, y]\bigr) , \]
the moment map being the commutator $[x,y]$, so that \eqref{eq:preproj} for the
doubled Jordan quiver is recovered, and $d^2(w_\xi) = \mathrm{Tr}(\xi [x,
(\ell+1)x^\ell]) = 0$. Here, $\mathrm{Jac}(Q,W) = \K[x]/(x^\ell)$, so by
Corollary \ref{cor:quiver}(2) the classical truncation is the stack
$[\{x \in \mathfrak{gl}_d : x^\ell = 0\} / GL_d]$ of endomorphisms annihilated by
$T^\ell$, and $h^{-2} \simeq \mathrm{Lie}\, G_x \neq 0$ at every point of it. For
$\ell = 1$ this is $BGL_d$.
\end{example}

\begin{remark}[Homogeneous potentials split off an odd line]\label{rem:quiver_split}
The generator $z$ does no cutting when $W$ is homogeneous. With $m$ as in
Corollary \ref{cor:quiver}(2), put $\eta := \frac{1}{m} \sum_a
\mathrm{Tr}(x_a y_a)$, of degree $-1$. Then, $d(\eta) = \frac{1}{m} \sum_a
\mathrm{Tr}(x_a\, \partial_a W) = W_d$ by Euler's relation, so $z' := z - \eta$
is a cycle and
\[ A_{G_d} \simeq B \otimes_\K \K[z'], \qquad d(z') = 0, \]
where $B$ is the standard-form cdga of $\dCrit([\,W_d\,/\,G_d\,])$, the local
model of quiver Donaldson--Thomas theory. The underlying derived scheme is
therefore the symplectic one with a free odd generator adjoined, and what
distinguishes the contact model is the form $\alpha$, not the derived structure.
For a potential that is not homogeneous, no such $\eta$ need exist, $W_d$ need not
vanish on $\mathrm{Crit}(W_d)$, and the truncation
$\{W_d = 0\} \cap \mathrm{Crit}(W_d)$ can be a proper closed subscheme of
$\mathrm{Rep}(\mathrm{Jac}(Q,W), d)$. The smallest instance is the one-loop
quiver with $W = x^3 + x^2$ and $d = 1$: there $\partial W = x(3x + 2)$ has
critical locus $\{0, -2/3\}$ while $W_d(-2/3) = 4/27$, so the truncation is the
single point $\{0\}$ and the generator $z$ removes the other.
\end{remark}

\begin{remark}[Framing]\label{rem:quiver_framing}
Let $Q^{\mathrm{fr}}$ be obtained from $Q$ by adjoining a vertex $\infty$ with
$d_\infty = 1$ and finitely many arrows $\infty \to i$, and keep the same
potential, so that the framing arrows do not occur in $W$. Proposition
\ref{prop:quiver} applies verbatim to
$V^{\mathrm{fr}} = \mathrm{Rep}(Q^{\mathrm{fr}}, d)$ with the same group $G_d$,
and the framing data on which $G_d$ acts freely form a $G_d$-invariant open
subset. On it, $G_x$ is trivial, so Corollary \ref{cor:quiver}(1) gives
$h^{-2} = 0$: the linearized
differential $\mathfrak{g}_d \to V^{\mathrm{fr}}$ of the moment map is injective,
the generators $w_\xi$ cancel against $\dim G_d$ of the degree $-1$ generators,
and the model of Definition \ref{def:discriminant} is available again. The degree
$-2$ generators of Theorem \ref{thm:A} are therefore carried exactly by the
locus of positive-dimensional stabilizer, which is the complement of the free
locus that framed invariants are designed to avoid.
\end{remark}

\section{Conclusion and Outlook}
\label{sec:conclusion}

Theorem \ref{thm:contact_darboux} presents a $-1$-shifted contact derived Artin
stack near a point with linearly reductive stabilizer $G$ as the quotient
$[\Delta\mathrm{loc}_G(s)/G]$ of a contact Darboux chart. The chart is the
Koszul--Tate algebra of a derived discriminant locus, extended in degree $-2$ by
generators indexed by $\mathfrak{g}$ with differential the contact moment map.
Symplectifying it returns the local model of quotient Donaldson--Thomas theory,
and of the two group directions in the resulting square the $\Gm$ direction
produces the symplectic structure and the $G$ direction the degree $-2$
generators (Proposition \ref{prop:equiv_sympl}).

At a point $z \in Z(\K)$ whose
stabilizer acts on the contact line through $\chi$, Proposition
\ref{prop:degree_two} computes
\[ h^{-2}(\LL_Z|_z) \simeq \mathrm{Lie}\,\ker(\chi) , \]
so they vanish exactly when $\ker(\chi)$ is finite, and the quotient of the
discriminant locus alone is a local model at no point where they are nonzero.
For a derived scheme they vanish and Corollary \ref{cor:schematic} returns the
Zariski-local statement that a $-1$-shifted contact derived scheme is the
derived discriminant locus of a regular function.

Quivers with potential are the case in which $h^{-2}$ is nonzero at every point.
The contact moment map is the moment map of the doubled quiver, the scalars act
trivially on $\mathrm{Rep}(Q,d)$, and for a homogeneous potential the classical
truncation is $\mathrm{Rep}(\mathrm{Jac}(Q,W), d)$ (Proposition
\ref{prop:quiver}, Corollary \ref{cor:quiver}). A framing makes the action free
on an open locus and $h^{-2}$ vanishes there, so the degree $-2$ generators are
carried exactly by the locus of positive-dimensional stabilizer (Remark
\ref{rem:quiver_framing}).

Equivariant \'etale vanishing cycles are computed on a quotient by a reductive
group, which a smooth atlas alone does not provide, and Theorem
\ref{thm:contact_darboux}(2) supplies one at every point with linearly reductive
stabilizer. The companion paper \cite{Izbudak_Monodromic} uses part (1) alone.

The behavior of these models under a change of chart remains to be described.
A definition of an $n$-shifted contactomorphism $f \colon X_1 \to X_2$ is not yet
available. We expect it to consist of an equivalence of contact distributions
$\cK_1 \xrightarrow{\sim} f^* \cK_2$ compatible with the tangent complexes,
together with a condition on shifted $1$-forms, and the coordinate changes
between the Darboux models of Theorem \ref{thm:contact_darboux} are a concrete
place to look for the right condition. Such a definition is also a prerequisite for
any shifted analogue of Gray stability, and for a notion of equivalence of
Legendrian correspondences under which $\LFc(X)$ would be invariant.

\section*{Acknowledgements}
The author thanks Kadri \.{I}lker Berktav for supervision and for collaboration on the prior work this paper builds upon.

\bibliographystyle{amsalpha}
\bibliography{refss}

\end{document}